\newcommand{\aut}{\text{Aut}}
\newcommand{\bT}{{\mathbb T}}
\newcommand{\bZ}{{\mathbb Z}}
\newcommand{\cC}{{\mathcal C}}
\newcommand{\cL}{{\mathcal L}}
\newcommand{\cT}{{\mathcal T}}
\newtheorem{thm}{Theorem}
\newtheorem{prop}[thm]{Proposition}
\newtheorem{lem}[thm]{Lemma}
\newtheorem{cor}[thm]{Corollary}
\theoremstyle{remark}
\newtheorem{ex}[thm]{Example}
\theoremstyle{definition}
\newtheorem{defn}[thm]{Definition}
\title[Symplectized Torelli mapping tori]{Symplectized Torelli mapping tori}
\author{J.~Amor\'os}
\address{Departament de Matem\`atiques, Universitat
Polit\`ecnica de Catalunya, Diagonal 647, 08028 Barcelona, Spain}
\email{jaume.amoros@upc.edu}
\keywords{Symplectic manifolds, Torelli group, mapping tori, lower central series, formality}
\subjclass{57K43,57K20,20F14}
\date{\today}
\begin{document}

\begin{abstract}
Examples of aspherical closed symplectic 4--manifolds are presented whose Sullivan minimal models are $(1,n)$--formal for any $n$, without being formal. They have as cohomology algebra, signature, canonical class, those of a product of surfaces $S_g \times \bT^2$, and the fundamental group, resp. $A_{\infty}$ category defined by their de Rham complex, is isomorphic to that of $S_g \times \bT^2$ up to brackets of order $n+1$, resp. products of order $n+1$. Nevertheless, the manifolds do not admit any holomorphic structure.

These examples are derived from the fact that the Torelli groups $\cT_g$ are pro--unipotent. The mapping tori for representatives of suitable classes in $\cT_g$ are considered, and their product with $S^1$ is symplectized \`a la Thurston.
\end{abstract}

\maketitle

{\em Dedicated to the memory of Agust\'{\i} Roig, who taught rational homotopy to the author, and to many others.}

\section*{Introduction}

Symplectic manifolds emerged in the last generation as a non--integrable version of holomorphic, or K\"ahler, manifolds, with prominent roles ranging from the qualitative study of Hamiltonian Mechanics to the topological classification of manifolds.Their comparison with complex algebraic and K\"ahler manifolds is a pertinent question, because the existence of a K\"ahler structure endows the manifold with analytic properties, from the flat Hermitian metric to the Laplacian identities in the Dolbeault complex, or pluriharmonicity of harmonic forms, which ultimately result in global, topological properties of the manifold such as formality, or the existence of a Hodge structure in cohomology which can be seen as a Dolbeault structure on the harmonic forms in the manifold.

Because of these reasons, there is an active quest for a notion of harmonic forms on a compact symplectic manifold with compatibility relations to the symplectic structure, such as the $d d^c$ Lemma for compact K\"ahler manifolds (\cite{Yan,Hal,CW}). Topological properties such as formality, Massey products or the Hard Lefschetz isomorphisms between cohomology groups of complementary degree provide context and limits to the notions from K\"ahler geometry that can be extended to the symplectic category (\cite{IRTU,FMU,UV}).

The aim of the present work is to present examples in order to shed light, from a topological and group--theoretical viewpoint, to this study of symplectic manifolds as a nonintegrable version of K\"ahler geometry. Examples are presented of aspherical, closed, symplectic 4--manifolds such that their cohomology algebra, signature, canonical class are those of a product of two compact Riemann surfaces $S_g \times E$, with $C_g$ of genus $g \ge 2$ and $E$ an elliptic curve, their associated $A_{\infty}$--category given by the de Rham complex is formal up to triple products, i.e. their Sullivan (1,2)--minimal model is formal, yet the manifolds themselves are not formal and do not admit any holomorphic structure.

Our examples are built by taking the mapping torus $M_\phi \to S^1$ of suitable diffeomorphisms $\phi : S_g \to S_g$ lying in the Torelli group of genus $g$, and multiplying it with $S^1$ to endow it with a symplectic structure. Building on properties of the lower central series of the surface groups $\pi_1(S_g,*)$, and on the pro--unipotence of the Torelli group, we propose an algorithm in Example \ref{ex:1nformals} to obtain examples of mapping tori which are, from a homotopy viewpoint, as close to the trivial family $S_g \times E$ as one wishes: their Sullivan $(1,n)$--minimal models are formal, and their Massey products of 1--cohomology classes up to length $n$ vanish up to any wished for value of $n$, yet the manifolds are not formal, and they will have a nonvanishing Massey product of some order $N>n$.   

This note is organized in the following way: section \ref{s:grups} discusses the required foundations on the lower central series of a group $\Gamma$, its nilpotent, $k$--unipotent completions for a field $k$ of characteristic zero, and Malcev algebras, and presents the fundamental groups and mapping class groups of surfaces with an orientation towards the lower central series in both cases. The only content in the section which the author has not found in the literature is the procedure in Example \ref{ex:1nformals}, which uses the pro--unipotent structure of the Torelli group and its lower central series to obtain explicit diffeomorphisms which are the identity in the fundamental group of the surface up to brackets of order $n$ for any prescribed $n$, but are claimed to be nontrivial in the Torelli group. Section \ref{s:mt} builds the symplectized mapping tori $X_\phi$ from the elements $\phi$ of the Torelli group, and deducts the $k$--homotopy type of $X_\phi$ from the effect of $\phi$ in the lower central series of $\pi_1(S_g,*)$. This yields the results of only partial formality, equivalently vanishing of Massey products up to a given length but not in all lengths, for the separating Dehn twist and for the diffeomorphisms proposed in Example \ref{ex:1nformals}, in the latter case provided that they are nontrivial as the author conjectures. This is applied to show that symplectized Torelli mapping tori do not admit a holomorphic structure, regardless of how small the difference between their homotopy type and that of the product of two Riemann surfaces is.

\noindent {\em Acknowledgements:} The author is grateful to Dieter Kotschick and Domingo Toledo for discussions over the topic of this work. The research has been partially supported by the Spanish State Research Agency
AEI/10.13039/50110 0 011033 grant PID2019-103849GB-I00 and by the AGAUR project 2021 SGR 00603 Geometry of Manifolds
and Applications, GEOMVAP.

\section{The mapping class and Torelli groups} \label{s:grups}

Let $S_g$ be an oriented, closed, connected surface of genus $g \ge 2$. From the homotopy viewpoint, it is an aspherical topological space with fundamental group given by a single relation
$$ 
\pi_1(S_g,*) = \langle a_1, \dots , a_g, b_1, \dots , b_g \; | \; (a_1,b_1) \dots (a_g,b_g) \rangle 
$$

The work of Dehn, Nielsen, Baer in group theory and low--dimensional, completed with the study by Whitney, Munkres and others of smooth maps (see \cite{Bir},\cite{FM} for more detailed accounts) allow for the presentation of the {\em mapping class group} of $S_g$ as
\begin{alignat*}{1}
M(g,0) &= \text{Out}^+ (\pi_1(S_g,*)) = \aut^+(\pi_1(S_g,*))/ \pi_1(S_g,*) \\
&\cong \text{Homeo}^+(S_g)/\text{Homeo}_0(S_g) \cong \text{Diff}^+(S_g)/\text{Diff}_0(S_g) \, ,
\end{alignat*}
where the action of the group $\pi_1(S_g,*)$ in its automorphism group is given by conjugation $g \cdot \varphi= g \varphi g^{-1}$, Homeo are homeomorphisms, Diff are diffeomorphisms of class $\cC^r$ for any $r=1, \dots, \infty$, the superscript $+$ means orientation--preserving, and the subscript $0$ means isotopic, resp. $\cC^r$--diffeotopic, to the identity.

\subsection{The nilpotent completion and the Malcev algebra}

Recall that for any group $\Gamma$, we can define the {\em bracket} of two of its elements as $(x,y)=xyx^{-1}y^{-1}$, and the {\em lower central series} of $\Gamma$ as the decreasing filtration
$$
\Gamma_1= \Gamma, \qquad \Gamma_n= \{ (x,y) | x \in \Gamma_{n-1}, y \in \Gamma \}
$$
We say that the group $\Gamma$ is nilpotent of class $n$, or $n$--step nilpotent, when $\Gamma_{n+1}= \{ 1 \}$.

The lower central series filtration associates to a group $ \Gamma$ several functorial objects, among which we recall:
\begin{itemize}
\item The abelianization $\Gamma/ \Gamma_2$, which is an Abelian group such that all group morphisms $\Gamma \to A$ with $A$ Abelian factor uniquely through the natural projection $\Gamma \to \Gamma/ \Gamma_2$.
\item The nilpotent quotients $\Gamma/ \Gamma_n$ for all $n \ge 2$, with nilpotency class $n$ and such that all group morphisms from $\Gamma$ to nilpotent groups of class $n$ factor through the natural projection $\Gamma \to \Gamma/ \Gamma_n$. These nilpotent quotients can be obtained by successive central extensions
\begin{equation} \label{eq:centrext}
0 \longrightarrow \Gamma_{n-1}/ \Gamma_n \longrightarrow \Gamma/ \Gamma_n \longrightarrow \Gamma/ \Gamma_{n-1} \longrightarrow 1
\end{equation}
and define the nilpotent completion of $\Gamma$ as the projective system of quotients
\begin{equation} \label{eq:proj}
\dots \rightarrow \Gamma/ \Gamma_{n} \rightarrow \Gamma/ \Gamma_{n-1} \rightarrow \dots \rightarrow \Gamma/ \Gamma_2
\end{equation}
\item The graded Lie algebra $\text{gr} \, (\Gamma) = \oplus_{n \ge 1} \Gamma_n/ \Gamma_{n+1}$, whose Lie bracket is induced by the group bracket in $\Gamma$, 
\item For a field $k$ with $\text{char}\,(k)=0$, the class $n$ $k$--unipotent completions $\Gamma/ \Gamma_n \otimes k$. These are defined iteratively through the central extensions of Eq. \ref{eq:centrext} by tensoring the Abelian kernel $\Gamma_{n-1}/ \Gamma_n$ and mapping the class in $H^2(\Gamma/ \Gamma_{n-1}; \Gamma_{n-1}/ \Gamma_n)$ defining the central extension to $H^2(\Gamma/ \Gamma_{n-1} \otimes k; \Gamma_{n-1}/ \Gamma_n \otimes k)$, where the former $H^2$ group sits as a lattice. The $\Gamma/ \Gamma_n \otimes k$ are $k$--unipotent matrix groups, containing the nilpotent quotients $\Gamma/ \Gamma_n$ modulo their torsion subgroups as lattices of maximal rank, and they form the $k$--unipotent completion of $\Gamma$, denoted $\Gamma \otimes k$ and defined by applying $\otimes k$ to the projective system of quotients \ref{eq:proj}. 
\item The Malcev algebra of the group $\Gamma$ over the field $k$, $\cL(\Gamma,k)$, is the projective system of nilpotent Lie algebras
\begin{equation} \label{eq:malcev}
\dots \rightarrow \cL_{n-1}(\Gamma,k) \rightarrow \cL_{n-2}(\Gamma,k) \rightarrow \dots \rightarrow \cL_1 (\Gamma,k) \cong \Gamma/ \Gamma_2 \otimes k
\end{equation}
defined by the projective system of the $k$--unipotent completion of $\Gamma$ through the categorical equivalence between unipotent Lie groups and nilpotent Lie algebras for a fixed field $k$ given by the exponential maps $\text{exp} : \cL_{n-1}(\Gamma,k) \rightarrow \Gamma/ \Gamma_n \otimes k$. The Lie bracket induces a lower central series filtration in the pro--unipotent Malcev algebra, inducing isomorphisms of graded Lie algebras
$$
\text{gr}\, \cL(\Gamma,k) \cong \text{gr}\, (\Gamma) \otimes k \cong \text{gr}\, (\Gamma \otimes k)
$$
\end{itemize}

See \cite{MKS} for the proof of these properties, and Appendix A of \cite{ABCKT} for their categorical presentation.

\medskip

The dual of each Malcev algebra $\cL_n(\Gamma,k)$ is Sullivan's $(1,n)$--minimal model (\cite{Sul,BG}). More precisely, if $\cL_n(\Gamma,k)$ is a finite--dimensional Lie algebra, and $V(1,n)= \text{Hom}\,(\cL,k)$ is its dual $k$--vector space, the dual map to the bracket 
$$ 
[.,.] : \wedge^2 \cL_n(\Gamma,k) \longrightarrow \cL_n(\Gamma,k)
$$
is a linear map
$$ 
d : V(1,n) \longrightarrow \wedge^2 V(1,n)
$$ 
such that $d^2=0$ by duality of the Jacobi identity in $\cL$. The free differential graded--commutative algebra (gcda) generated by $(V(1,n),d)$, denoted by $M(1,n)$, is the $(1,n)$--minimal model of the group $\Gamma$ (or of a topological space $X$ with $\pi_1(X,*) \cong \Gamma$). 
The projective system of $n$--step Malcev Lie algebras in \eqref{eq:malcev} dualizes as an injective system of free gcda given by inclusions
$$
M(1,1) \hookrightarrow M(1,2) \hookrightarrow \dots \hookrightarrow M(1,n) \hookrightarrow \dots
$$
whose injective limit, which we will denote $M(2,0)$, is Sullivan's 1--minimal model of $\Gamma$ (or of a topological space $X$ with $\Gamma \cong \pi_1(X,*)$).

A topological space $X$ has the property of formality when Sullivan's minimal model, of which the above described 1--minimal model is the start of the injective system, can be computed from the cohomology algebra $H^*(X; k)$ (\cite{DGMS}). The space is $(1,n)$--formal if the $(1,l)$--minimal models can be computed from $H^*(X; k)$ for $l \le n$, and 1--formal if it is $(1,n)$--formal for all $n$ (see \cite{ABCKT}).  

\bigskip

Due to their naturality, the automorphism group $\aut(\Gamma)$ acts on all of the above objects. The graded Lie algebra $\text{gr}\,(\Gamma)$ is generated by its degree 1 component $\Gamma/ \Gamma_2$. This means that each graded component $\Gamma_n/ \Gamma_{n+1} \otimes k$ is obtained as the quotient of the Lie elements in the tensor product $(\Gamma/ \Gamma_2 \otimes k)^{\otimes n}$ by relations induced by the defining presentation of $\Gamma$. Therefore, for each $\varphi \in \aut(\Gamma)$ the automorphism $\varphi_n/ \varphi_{n+1}$ that it induces in $\Gamma_n/ \Gamma_{n+1} \otimes k$ is the image of the automorphism $\varphi/ \varphi_2$ that it induces in $\Gamma/ \Gamma_2 \otimes k$ by the map 
$$ 
\aut_{k-lin}(\Gamma/ \Gamma_2 \otimes k) \longrightarrow \aut_{k-lin}(\Gamma_n/ \Gamma_{n+1} \otimes k) \subset (\Gamma/ \Gamma_2 \otimes k)^{\otimes n} \; ,
$$
where the inclusion of the arrival space is obtained by choosing a linear section to the quotient $\text{Lie} \left( (\Gamma/ \Gamma_2 \otimes k)^{\otimes n} \right) \to \Gamma_n/ \Gamma_{n+1} \otimes k$.

\subsection{The case of hyperbolic surface groups} \label{ss:hyp}

Henceforth, we shall examine these nilpotent completions in the case of hyperbolic ($g \ge 2$) surface groups. These groups admit a presentation with only one defining relation, 
\begin{equation} \label{eq:pres}
\pi_1(S_g,*)= \langle a_1, \dots, a_g, b_1, \dots, b_g | (a_1,b_1) \cdots (a_g,b_g) \rangle \; .
\end{equation}
They are residually nilpotent groups, i.e. $\cap_{n \ge 1} \Gamma_n = \{ 1 \}$, or equivalently the natural map to the nilpotent completion $\Gamma \rightarrow \lim_{\leftarrow} \Gamma/ \Gamma_n$ is injective, and have trivial center (see \cite{MKS,Lab}).
 
Labute (\cite{Lab}) found that their Malcev algebra $\cL(\pi_1(S_g,*))$ can be also defined as a quotient of the free Lie algebra generated by the $2g$ variables $A_1, \dots, B_g$ by the single, homogeneous in degree 2, relation $[A_1,B_1]+ \dots +[A_g,B_g]=0$. This implies that the Malcev algebra is isomorphic to the graded Lie algebra $\text{gr} (\pi_1(S_g,*))$, and is residually nilpotent. The subalgebra formed by omitting just any of the generators is free, so the Malcev algebra has a trivial center.

Hyperbolic surface groups have abelianizations $\pi_1(S_g,*)/ \pi_1(S_g,*)_2 \cong H_1(S_g; \bZ) \cong \bZ^{2g}$. For any field $k$ with $\text{char} \, (k)=0$, the intersection alternate 2-form in the surface homology $H_1(S_g; k)$ is preserved by automorphisms of its fundamental group, so the action of the automorphism group defines a morphism
\begin{equation} \label{eq:ss}
\aut(\pi_1(S_g,*)) \longrightarrow \text{Sp}\,(2g,k)
\end{equation}
This morphism is onto (see \cite{MKS}), and it factors through the quotient $\aut(\pi_1(S_g,*)) \to M(g,0)$ because the conjugation action of $\pi_1(S_g,*)$ on its abelianization is trivial. 

For any finitely presented group $\Gamma$, the action of $\aut(\Gamma)$ on the Malcev algebra $\cL(\Gamma,k)$ preserves the lower central series filtration $\{ \cL(\Gamma,k)_n \}_{n \ge 1}$. The induced action in its graded pieces $\Gamma_n/ \Gamma_{n+1} \otimes k$ factors through the morphism in \eqref{eq:ss}. Fix a nilpotency class $n$, and a linear basis for the Malcev algebra $\cL_n(\Gamma,k)$ given by a linear isomorphism with its graded quotient $\oplus_{l=1}^{n} \Gamma_l/ \Gamma_{l+1} \otimes k$. For any $\varphi \in \aut(\Gamma)$, the induced automorphism $\varphi_* : \cL_n(\Gamma,k) \to \cL(\Gamma,k)$ has a box structure arising from the graded quotient for which the matrix is block--lower triangular. The restriction of the action of $\aut(\Gamma)$ to the diagonal blocks factors through its action on the first block $\Gamma/ \Gamma_2 \otimes k$ and this diagonal block action is actually defined on the outer automorphism group $\text{Out}\,(\Gamma)=\aut(\Gamma)/\Gamma$. 

In the case of $\Gamma= \pi_1(S_g,*)$, the action of its automorphisms on the abelianization is semisimple, so for these groups we can interpret the action of $\aut\,(\Gamma)$ on the Malcev algebras $\cL_n(\Gamma,k)$ as a unipotent extension of the semisimple action on the abelianization. In the projective limit, the action of $\aut\,(\Gamma)$ on the Malcev algebra $\cL(\Gamma,k)$ is a pro--unipotent extension of its action on the abelianization $\Gamma/ \Gamma_2 \otimes k$.

\subsection{The Torelli group}

After studying the actions of $\aut\,(\Gamma), \; \text{Out}\,(\Gamma)$ in the Malcev algebra $\cL(\Gamma,k)$ for hyperbolic surface groups we are ready to introduce the group of our main concern:

\begin{defn}
Let $S_g$ be an orientable, closed, connected surface of genus $g \ge 2$, $\Gamma= \pi_1(S_g,*)$. 
\begin{enumerate}
\item The Torelli group of genus $g$ with one marked point, $\cT_{g,1}$, is the kernel of the morphism
$$ 
\aut\,(\Gamma) \longrightarrow \text{Sp}\,(\Gamma/ \Gamma_2)
$$
\item The Torelli group of genus $g$, $\cT_g$, is the kernel of the morphism 
$$ 
\text{Out}\,(\Gamma) \longrightarrow \text{Sp}\,(\Gamma/ \Gamma_2)
$$
\end{enumerate}
\end{defn}

Here $\text{Sp}\,(\Gamma/ \Gamma_2)$ denotes the group of automorphisms of $\Gamma/ \Gamma_2 \cong \bZ^{2g}$ which preserve the nondegenerate, alternating intersection pairing $\wedge^2 H_1(S_g; \bZ) \to H_0(S_g; \bZ) \cong \bZ$.

As inner automorphisms of a group act trivially on its abelianization, the two presented Torelli groups are related by the exact sequence
$$
1 \longrightarrow \pi_1(S_g,*) \longrightarrow \cT_{g,1} \longrightarrow \cT_g \longrightarrow 1
$$
We will use the groups $\cT_{g,1}$ in an ancillary role, based in two of their properties: the first is that, unlike in the case of Torelli groups, there is an action
\begin{equation} \label{eq:preT}
\cT_{g,1} \longrightarrow \aut\,(\cL(\Gamma,k))
\end{equation}
which is the restriction of the action of $\aut\,(\Gamma)$ discussed above, and the projective limit of the actions of $\cT_{g,1}$ on the $n$--step Malcev algebras $\cL_n(\Gamma,k)$. The second property is:

\begin{thm} \label{t:asada} (Magnus, Asada \cite{Asa})
The action \ref{eq:preT} is faithful.
\end{thm}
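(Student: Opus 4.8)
The plan is to prove that the representation is injective: a class $\varphi \in \cT_{g,1}$ inducing the identity on the whole Malcev algebra $\cL(\Gamma,k)$ must be the identity automorphism of $\Gamma$. By construction the action on $\cL(\Gamma,k)$ is the projective limit of the actions on the finite--step algebras $\cL_n(\Gamma,k)$, so the hypothesis is exactly that $\varphi$ acts as the identity on every $\cL_n(\Gamma,k)$. The strategy is to descend this triviality first to the integral nilpotent quotients $\Gamma/\Gamma_n$, and then, by residual nilpotence, all the way to $\Gamma$ itself.

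First I would transfer the triviality from the $k$--linear setting back to the integral nilpotent tower. By Labute's description recalled in Subsection \ref{ss:hyp}, $\text{gr}\,(\Gamma)$ is the quotient of the free Lie algebra on $2g$ generators by a single homogeneous degree--$2$ relation, and each graded piece $\Gamma_n/\Gamma_{n+1}$ is finitely generated and torsion--free. Hence each nilpotent quotient $\Gamma/\Gamma_n$ is a finitely generated torsion--free nilpotent group sitting as a lattice of maximal rank in the $k$--unipotent completion $\Gamma/\Gamma_n \otimes k$, which corresponds under $\exp$ to $\cL_{n-1}(\Gamma,k)$. Since $\varphi$ induces the identity on $\cL_{n-1}(\Gamma,k)$, functoriality of $\exp$ and $\log$ forces it to induce the identity on $\Gamma/\Gamma_n \otimes k$, and therefore on the lattice $\Gamma/\Gamma_n$ itself. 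This holds for every $n$.

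Then I would invoke residual nilpotence to conclude. For any $x \in \Gamma$ the previous step gives $\varphi(x) \equiv x \pmod{\Gamma_n}$ for all $n$, that is $\varphi(x)x^{-1} \in \bigcap_{n \ge 1}\Gamma_n$. Because hyperbolic surface groups are residually nilpotent, $\bigcap_{n \ge 1}\Gamma_n = \{1\}$, so $\varphi(x)=x$ for every $x \in \Gamma$ and $\varphi = \text{id}$, which is the desired faithfulness.

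The heart of the matter, and the only non--formal step, is the descent in the second paragraph: a priori an automorphism could become trivial after $\otimes k$ while still acting by a finite--order automorphism on the integral quotient $\Gamma/\Gamma_n$. It is precisely the fact that surface groups are residually \emph{torsion--free} nilpotent --- equivalently that the Labute graded algebra is torsion--free in each degree --- that rules this out, and this is the substance supplied by Magnus's treatment of the free case and Asada's extension; the stated residual nilpotence alone is not quite enough. I would also note that the same computation traps the kernel of the full $\aut(\Gamma)$--action inside $\cT_{g,1}$, since an automorphism trivial already on $\cL_1(\Gamma,k) \cong \Gamma/\Gamma_2 \otimes k$ lies in the Torelli group; thus faithfulness for $\cT_{g,1}$ and for $\aut(\Gamma)$ are equivalent.
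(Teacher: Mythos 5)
The paper gives no proof of this theorem: it is imported from the literature (Magnus for the free--group case, Asada \cite{Asa} for surface groups), so there is no internal argument to compare yours against. Your proposal is, as far as I can check, a correct reconstruction of the standard argument, and you have correctly located the one non--formal ingredient. The chain you use --- triviality on each $\cL_n(\Gamma,k)$ passes through $\exp$ to the $k$--unipotent group $\Gamma/\Gamma_{n+1}\otimes k$, then restricts to the integral quotient $\Gamma/\Gamma_{n+1}$ \emph{provided} that quotient injects into its $k$--completion, and finally residual nilpotence ($\bigcap_n \Gamma_n=\{1\}$) kills $\varphi(x)x^{-1}$ --- is exactly right, and the injectivity step is where residual nilpotence alone would not suffice: one needs each $\Gamma/\Gamma_n$ torsion--free, i.e.\ residual \emph{torsion--free} nilpotence, which is what Labute's theorem (each graded piece $\Gamma_l/\Gamma_{l+1}$ a free $\bZ$--module) supplies via the central extensions. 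Your closing remark that the kernel of the full $\aut(\Gamma)$--action already lands in $\cT_{g,1}$ (because $\Gamma/\Gamma_2\cong\bZ^{2g}$ embeds in $k^{2g}$) is also correct. One point worth flagging: the reason this statement carries Asada's name rather than being a routine exercise is the basepoint--free version, i.e.\ faithfulness for $\cT_g\subset\text{Out}(\Gamma)$, where an outer class could conceivably act trivially on each $\Gamma/\Gamma_n$ only after composing with inner automorphisms that vary with $n$; your argument, like the theorem as stated in the paper, concerns $\cT_{g,1}\subset\aut(\Gamma)$ and does not touch that subtlety, which is fine for what is asserted here.
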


We can restate this theorem as saying that the Torelli group with one marked point is pro--unipotent, i.e. a subgroup of the projective limit of the $k$--unipotent groups $\aut\,(\cL_n(\Gamma,k))$. Inmediate consequences are:

\begin{cor} (Magnus, Asada \cite{Asa})
\begin{enumerate}
\item $\cT_{g,1}$ does not have torsion.
\item $\cT_{g,1}$ is residually nilpotent.
\item $\cT_{g,1}$ is residually finite.
\end{enumerate}
\end{cor}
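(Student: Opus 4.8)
The plan is to deduce all three statements from the reformulation of Theorem \ref{t:asada}: the faithful action embeds $\cT_{g,1}$ into the projective limit $\lim_{\leftarrow} \aut(\cL_n(\Gamma,k))$ for a field $k$ with $\text{char}\,(k)=0$. Since every element of $\cT_{g,1}$ acts trivially on the abelianization $\Gamma/\Gamma_2 \otimes k$ by the very definition of the Torelli group, the discussion of Section \ref{ss:hyp} shows that its action on each $\cL_n(\Gamma,k)$ preserves the lower central series filtration and is the identity on the associated graded; hence the image lands in the subgroup $U_n \subset \aut(\cL_n(\Gamma,k))$ of unipotent automorphisms (block--lower--triangular with identity diagonal blocks). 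Each $U_n$ is a finite--dimensional unipotent group over $k$, and I will use two standard facts about such groups: over a field of characteristic zero they are torsion--free, and as abstract groups they are nilpotent.

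For (i), I would note that $U_n$ is torsion--free by the $\exp/\log$ correspondence: a unipotent automorphism $u=1+N$ with $N$ nilpotent has a well--defined $\log u$, and $u^m=1$ forces $m \log u = 0$ by injectivity of $\exp$ on nilpotents, whence $\log u = 0$ and $u=1$ since $\text{char}\,(k)=0$. A projective limit of torsion--free groups is torsion--free, and so is any subgroup of one; combining these with the embedding gives that $\cT_{g,1}$ has no torsion. For (ii), I observe that for a nontrivial $\phi \in \cT_{g,1}$ faithfulness produces an index $n$ with nontrivial image of $\phi$ in $U_n$; the image of $\cT_{g,1}$ in the nilpotent group $U_n$ is then a nilpotent quotient of $\cT_{g,1}$ detecting $\phi$. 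As $\phi$ was arbitrary, no nontrivial element lies in every term of the lower central series, i.e. $\cap_n (\cT_{g,1})_n = \{1\}$, which is residual nilpotence.

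Statement (iii) is the one I expect to resist the pro--unipotent picture directly, since the groups $U_n$ over $\Q$ or $\C$ admit no finite quotients. I would instead pass to the integral nilpotent quotients $N_n := \Gamma/\Gamma_n$. The action of $\phi$ on $\cL_n(\Gamma,k)$ is induced from its action on the finitely generated nilpotent group $N_n$, so if $\phi$ acted trivially on every $N_n$ it would act trivially on every $\cL_n(\Gamma,k)$ and hence be trivial by Theorem \ref{t:asada}; thus a nontrivial $\phi$ moves some $x \in N_n$, and $w := \phi(x)x^{-1} \neq 1$ in $N_n$. Finitely generated nilpotent groups are residually finite, and in any finitely generated group the intersection of all subgroups of a bounded index is a characteristic subgroup of finite index; choosing the bound large enough to exclude $w$ yields a characteristic finite--index $K \triangleleft N_n$ with $w \notin K$. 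Being characteristic, $K$ is $\phi$--invariant, so $\phi$ descends to $\aut(N_n/K)$, a finite group, with nontrivial image because the class of $w$ is nontrivial in $N_n/K$. This detects $\phi$ in a finite quotient of $\cT_{g,1}$, giving residual finiteness.

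The main obstacle is precisely this last bridge: the unipotent completions over a characteristic--zero field see no finite quotients, so residual finiteness cannot be read off the embedding of Theorem \ref{t:asada} alone. Overcoming it requires combining that faithfulness with the residual finiteness of the integral nilpotent quotients $\Gamma/\Gamma_n$ and the characteristic--subgroup trick needed to keep the induced automorphism well defined on a finite quotient. The torsion--freeness and residual nilpotence, by contrast, follow formally from the pro--unipotent structure.
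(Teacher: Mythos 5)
Your proposal is correct. For items (i) and (ii) it is essentially the paper's own approach: the paper states the corollary with no proof, as an \emph{immediate} consequence of Theorem \ref{t:asada}, and your $\exp/\log$ argument for torsion-freeness of the block-unipotent groups, plus the detection of any nontrivial element in some nilpotent group $U_n$ (so that it survives in a nilpotent quotient of $\cT_{g,1}$), are exactly the routine verifications that the word ``immediate'' suppresses. The genuine added value is in item (iii), where the paper again offers nothing and you correctly observe that nothing \emph{can} be read off the characteristic-zero embedding alone: the unipotent groups $U_n$ over $k$ with $\mathrm{char}\,(k)=0$ are divisible, hence admit no nontrivial finite quotients. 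Your bridge---functoriality of the $k$-unipotent completion in the integral quotient $\Gamma/\Gamma_n$, residual finiteness of finitely generated nilpotent groups, and the characteristic-subgroup trick (intersecting all subgroups of index at most $M$, so that the induced automorphism of the finite quotient is well defined)---is valid and is precisely what is needed. One simplification worth noting: for (iii) you do not need Theorem \ref{t:asada} at all. A nontrivial $\phi \in \cT_{g,1} \subset \aut(\Gamma)$ already moves some $x \in \Gamma$, so $w=\phi(x)x^{-1}\neq 1$ in $\Gamma$; residual nilpotence of $\Gamma$ (recalled in Section \ref{ss:hyp}) gives $w \notin \Gamma_n$ for some $n$, the action descends to $\Gamma/\Gamma_n$ because $\Gamma_n$ is characteristic, and your argument then proceeds verbatim. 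This streamlined version is Baumslag's classical proof that the automorphism group of a finitely generated residually finite group is residually finite, specialized to the surface group; your version routes the nontriviality through the Malcev algebras first, which is harmless but heavier than necessary.
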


\subsection{Elements of interest in the Torelli group}

The Torelli group is generated by two kinds of self--diffeomorphisms of $S_g$ (\cite{Pow}):
\begin{enumerate}
\item {\em Johnson twists}: one of these is obtained by taking two disjoint smooth simple closed curves $C_1,C_2$ such that $S_g \setminus C_i$ is connected for $i=1,2$, but $S_g \setminus (C_1 \cup C_2)$ has two connected components $\Sigma, \tilde{\Sigma}$. Orient $C_1,C_2$ so that with the orientation of $S_g$ we have that $\partial \Sigma= C_1 + C_2$. The Johnson twist is the composition of two Dehn twists along $C_1,C_2$, with support in disjoint bicollar neighbourhoods of the two curves. The orientation condition means that $[C_1]+[C_2]=0 \in H_1(S_g;\bZ)$, so by the classical Picard--Lefschetz formula (see \cite{AVGZ}, \cite{Amo}, \cite{AMO}) the map induces the identity in $H_1(S_g; \bZ)$.  
\item {\em Separating Dehn twists}: these are Dehn twists along a smooth simple closed curve $C$ which disconnects $S_g$. As $C$ is the boundary of each component of $S_g \setminus C$, it is nullhomologous and by the classical Picard--Lefschetz formula the Dehn twist induces the identity in homology. We will see below that it is not the identity in homotopy if $C$ is not contractible.
\end{enumerate} 

Let us study with more detail the separating Dehn twist along a smooth curve $C$:
the homotopy (or non--abelian) Picard--Lefschetz formula (\cite{AMO}, see also Ch. 3 of \cite{Amo}) states that the image of every closed loop $\gamma$ intersecting $C$ transversely is homotopic to the insertion of a copy of $C$, oriented according to the sign of the intersection, at each intersection point in $C \cap \gamma$. 

\begin{figure}[ht]
\includegraphics[scale=0.6]{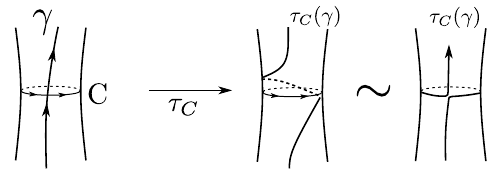}
\caption{Homotopy Picard--Lefschetz formula: up to homotopy, the effect of a Dehn twist along a simple loop $C$ on a path $\gamma$ intersecting $C$ transversely is to insert a copy of $C$ at each intersection point with it of $\gamma$.}
\end{figure}

For $C$ to be homotopically nontrivial, it must decompose $S_g \setminus C$ as a disjoint union $\Sigma_1 \sqcup \Sigma_2$, where each $\Sigma_i$ is an oriented surface of genus $g_i>0$ and one boundary component. We can choose a presentation of $\pi_1(S_g,*)$ adapted to this decomposition by choosing a base point $* \in \Sigma_1$, loops $a_1, b_1, \dots, a_{g_1}, b_{g_1}$ generating $\pi_1(\Sigma_1,*)$ so that the conjugation class of the boundary $C$ is $w_C=(a_1,b_1) \cdots (a_{g_1},b_{g_1})$, a base point $\tilde{*}$ and loops $\tilde{a}_1, \tilde{b}_1, \dots, \tilde{a}_{g_2}, \tilde{b}_{g_2}$ generating $\pi_1(\Sigma_2,\tilde{*})$ so that the conjugation class of the boundary $C$ is $(\tilde{a}_1,\tilde{b}_1) \cdots (\tilde{a}_{g_2},\tilde{b}_{g_2})$, and finally replacing the generators from $\Sigma_2$ with loops $a_{g_1+i}=\gamma \cdot \tilde{a}_i \cdot \gamma^{-1}, b_{g_1+i}=\gamma \cdot \tilde{b}_i \cdot \gamma^{-1},$ where $\gamma$ is a smooth, simple path from $*$ to $\tilde{*}$ intersecting $C$ transversely at one point (see Fig. \ref{f:pradapt}).

\begin{figure}[ht]
\includegraphics[scale=0.45]{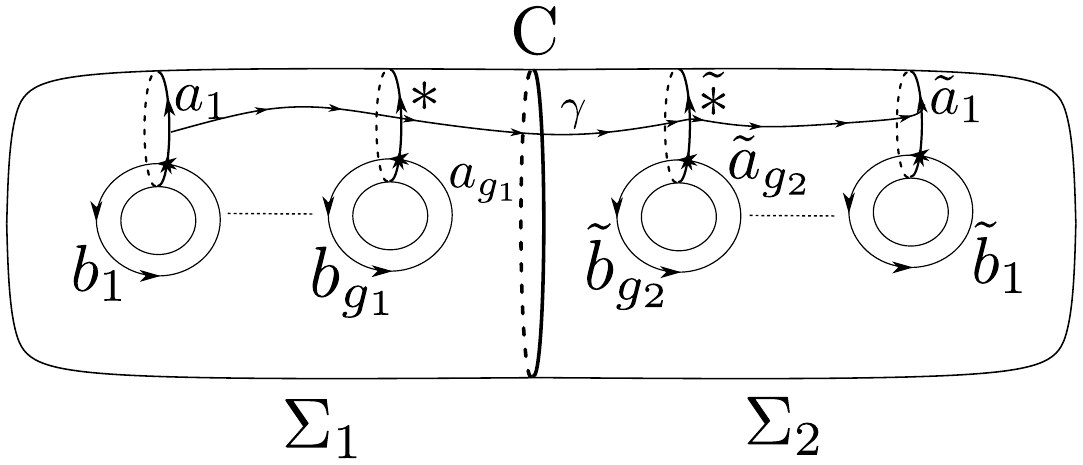}
\caption{Presentation of $\pi_1(S_g,*)$ adapted to a decomposition by a simple closed curve $C$ in 2 connected components with genus $g_1,g_2>0$.}
\label{f:pradapt}
\end{figure}

By the homotopy Picard--Lefschetz formula, a Dehn twist $\tau_C$ along the separating curve $C$ leaves invariant the loops $a_1,b_1, \dots , a_{g_1}, b_{g_1}$ in $\Sigma_1$ as they are disjoint from $C$. The loops $a_{g_1+1},b_{g_1+1}, \dots , a_{g}, b_{g}$ intersect $C$ twice, both times at the point $C \cap \gamma$ as the path $\gamma$ is covered on the way from $*$ to $\tilde{*}$ and back. Thus the image of each of these loops is 
$$
\tau_{C*} (a_j)= w_C a_j w_C^{-1}=(w_C,a_j) a_j \, , \qquad \tau_{C*} (b_j)= w_C b_j w_C^{-1}=(w_C,b_j) b_j 
$$
We have thus found, with the adapted basepoint $*$ and presentation of $\pi_1(S_g,*)$, that the separating Dehn twist is the identity modulo $\pi_1(S_g,*)_3$. Each of the group elements $\tau_{C*}(a_j)a_j^{-1},\tau_{C*}(b_j) b_j^{-1}$ belongs to the basis of $\pi_1(S_g,*)_3/ \pi_1(S_g,*)_4$ found by Labute in \cite{Lab}, so $\tau_{C*}$ is not the identity modulo $\pi_1(S_g,*)_4$. 

We have recalled at the end Section \ref{ss:hyp} how the preservation of the lower central series filtration induces a block structure in the matrices of the action $\aut\,(\Gamma) \rightarrow GL(\cL_n(\Gamma,k))$ for which they are block--lower triangular, and the diagonal blocks, i.e. the induced automorphisms in $\Gamma_l/ \Gamma_{l+1} \otimes k$, factor through the induced automorphism in $\Gamma/ \Gamma_2 \otimes k \cong H_1(\Gamma; k)$. Torelli automorphisms of $\pi_1(S_g,*)$ are the identity in $H_1(S_g; k)$, so the pro--unipotence of the Torelli group manifests itself as a projective limit of morphisms $\cT_{g,1} \rightarrow GL(\cL_n(\Gamma,k))$ whose images are block--unipotent (lower triangular) matrices. An immediate consequence of this structure, which can also be proved directly using brackets in the fundamental group, is

\begin{lem} \label{l:ordre}
If $\tau_1, \tau_2$ are two elements in $\cT_{g,1}$ such that they are the identity in $\pi_1(S_g,*)/ \pi_1(S_g,*)_{l_1}, \pi_1(S_g,*)/ \pi_1(S_g,*)_{l_2}$ respectively, then the bracket $(\tau_1,\tau_2)=\tau_1 \tau_2 \tau_1^{-1} \tau_2^{-1}$ is the identity in $\pi_1(S_g,*)/ \pi_1(S_g,*)_{l_1+l_2-1}$.   
\end{lem}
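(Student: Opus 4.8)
The plan is to recast the statement in terms of the descending filtration of $\aut(\Gamma)$, where $\Gamma:=\pi_1(S_g,*)$, defined for $m\ge 1$ by
\[
A(m)=\{\varphi\in\aut(\Gamma):\varphi(x)x^{-1}\in\Gamma_m\text{ for all }x\in\Gamma\}.
\]
The hypotheses read $\tau_1\in A(l_1)$, $\tau_2\in A(l_2)$, and the conclusion to be proved is $(\tau_1,\tau_2)\in A(l_1+l_2-1)$. I would first record that, since $\Gamma_m$ is characteristic in $\Gamma$, both the relation $\varphi(x)x^{-1}\in\Gamma_m$ and the pointwise congruence modulo $\Gamma_m$ of two homomorphisms are closed under products and inverses (using a cocycle identity such as $\varphi(xy)(xy)^{-1}=\big(\,{}^{\varphi(x)}(\varphi(y)y^{-1})\big)(\varphi(x)x^{-1})$ and the normality of $\Gamma_m$), so that both may be tested on a generating set. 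Writing $\varphi=\tau_1$, $\psi=\tau_2$, the identity $(\varphi,\psi)=(\varphi\psi)(\psi\varphi)^{-1}$ together with the substitution $z=(\psi\varphi)^{-1}(x)$ reduces the lemma to the claim that $\varphi$ and $\psi$ commute modulo $\Gamma_{l_1+l_2-1}$, that is, $\varphi\psi(x)\equiv\psi\varphi(x)\pmod{\Gamma_{l_1+l_2-1}}$ for every generator $x$.

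The technical core is the following sublemma, which I would prove by induction on $p$: if $w\in\Gamma_p$ and $\psi\in A(q)$, then $\psi(w)\equiv w\pmod{\Gamma_{p+q-1}}$. The base case $p=1$ is the definition of $A(q)$, and for the inductive step one uses that $\Gamma_p$ is generated by commutators $(u,v)$ with $u\in\Gamma_{p-1}$, $v\in\Gamma$, a cocycle argument reducing a general element of $\Gamma_p$ to these. For such a commutator $\psi((u,v))=(\psi(u),\psi(v))$; by induction $\psi(u)=u\,a$ with $a\in\Gamma_{p+q-2}$ and $\psi(v)=v\,b$ with $b\in\Gamma_q$, and the commutator-calculus identities $(xy,z)={}^x(y,z)\,(x,z)$ and $(x,yz)=(x,y)\,{}^y(x,z)$, combined with $(\Gamma_s,\Gamma_t)\subseteq\Gamma_{s+t}$ and normality, give $(u\,a,\,v\,b)\equiv(u,v)\pmod{\Gamma_{p+q-1}}$, the correction terms $(a,v)$ and $(u,b)$ both lying in $\Gamma_{p+q-1}$.

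With the sublemma in hand the computation closes quickly. Writing $\varphi(x)=x\,f(x)$ with $f(x)\in\Gamma_{l_1}$ and $\psi(x)=x\,g(x)$ with $g(x)\in\Gamma_{l_2}$, one gets $\varphi\psi(x)=x\,f(x)\,\varphi(g(x))$ and $\psi\varphi(x)=x\,g(x)\,\psi(f(x))$; applying the sublemma to $\varphi(g(x))$ (with $g(x)\in\Gamma_{l_2}$, $\varphi\in A(l_1)$) and to $\psi(f(x))$ (with $f(x)\in\Gamma_{l_1}$, $\psi\in A(l_2)$) yields
\[
\varphi\psi(x)\equiv x\,f(x)\,g(x),\qquad \psi\varphi(x)\equiv x\,g(x)\,f(x)\pmod{\Gamma_{l_1+l_2-1}}.
\]
Finally $f(x)\,g(x)$ and $g(x)\,f(x)$ differ by $(f(x),g(x))\in\Gamma_{l_1+l_2}\subseteq\Gamma_{l_1+l_2-1}$, so they agree modulo $\Gamma_{l_1+l_2-1}$; as this holds on generators, the first paragraph gives $(\tau_1,\tau_2)\in A(l_1+l_2-1)$, which is the assertion.

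I expect the main obstacle to be the bookkeeping in the sublemma: checking that the commutator-calculus corrections really land in $\Gamma_{p+q-1}$ and that the reduction of a general element of $\Gamma_p$ to the generating commutators loses no degree. A conceptually cleaner, though less self-contained, alternative is to pass to the graded Lie algebra $\mathrm{gr}(\Gamma)\otimes k$, where $\tau_i$ induces the identity plus a derivation raising degree by $l_i-1$ (exactly the block-unipotent structure recalled before the lemma); the bracket of such derivations raises degree by $(l_1-1)+(l_2-1)$, and Baker--Campbell--Hausdorff then places $(\tau_1,\tau_2)$ in $A(l_1+l_2-1)$. For a self-contained note I would favour the elementary argument above.
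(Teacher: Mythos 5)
Your proof is correct, but it takes the route the paper only alludes to rather than the one it officially uses, so the two are worth comparing. The paper gives no written argument: it presents the lemma as an ``immediate consequence'' of the block--unipotent structure recalled at the end of Section 1.2 --- Torelli elements act on each Malcev algebra $\cL_n(\Gamma,k)$ by block--lower--triangular matrices with identity diagonal blocks, an element that is trivial modulo $\Gamma_{l_i}$ acting as the identity plus blocks shifting degree by at least $l_i-1$, so that the matrix commutator shifts degree by at least $(l_1-1)+(l_2-1)=l_1+l_2-2$ --- and then remarks that the lemma ``can also be proved directly using brackets in the fundamental group.'' Your main argument is exactly that direct bracket proof (it is the classical Andreadakis argument for the filtration $A(m)$, nowadays called the Andreadakis--Johnson filtration), while your closing paragraph sketches precisely the paper's Lie--algebra route. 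Your bookkeeping is sound: in the sublemma the corrections $(a,v)$ and $(u,b)$ land in $\Gamma_{p+q-1}$ as claimed, and the cross term $(a,b)$, which you do not mention explicitly, lies even deeper, in $\Gamma_{p+2q-2}\subseteq\Gamma_{p+q-1}$, so nothing is lost. Two remarks on what each approach buys. Your elementary argument is self--contained, integral, and valid for an arbitrary group $\Gamma$: it needs neither Asada's faithfulness theorem, nor the block structure of Section 1.2, nor Labute's torsion--freeness of $\text{gr}\,(\pi_1(S_g,*))$, which the Malcev route implicitly uses to transfer triviality of the action over $k$ back to triviality on the integral quotients $\Gamma/\Gamma_m$. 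Conversely, the paper's route is a one--line matrix computation once its machinery is in place --- but note that mere block--unipotence (degree shift $\ge 1$) would only give the bracket trivial modulo $\Gamma_3$; the refinement that triviality modulo $\Gamma_{l_i}$ forces a degree shift of at least $l_i-1$ is the graded avatar of your sublemma, so your write--up in fact supplies the technical step that the paper's ``immediate consequence'' leaves implicit.
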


This property lets us find elements of the Torelli group with increasingly subtler effect on $\pi_1(S_g,*)$:

\begin{ex} \label{ex:1nformals}
Let $S_4$ be an oriented, closed, connected surface of genus 4, and $C_1,C_2 \subset S_4$ two simple closed curves, such that $S_4 \setminus C_i$ has two connected components of genus 2 for $i=1,2$, $S_4 \setminus (C_1 \cup C_2)$ has four connected components of genus 1 with connected boundary, $C_1,C_2$ intersect transversely at 2 points (see Fig. \ref{f:4anses}).

\begin{figure}[ht]
\includegraphics[scale=0.6]{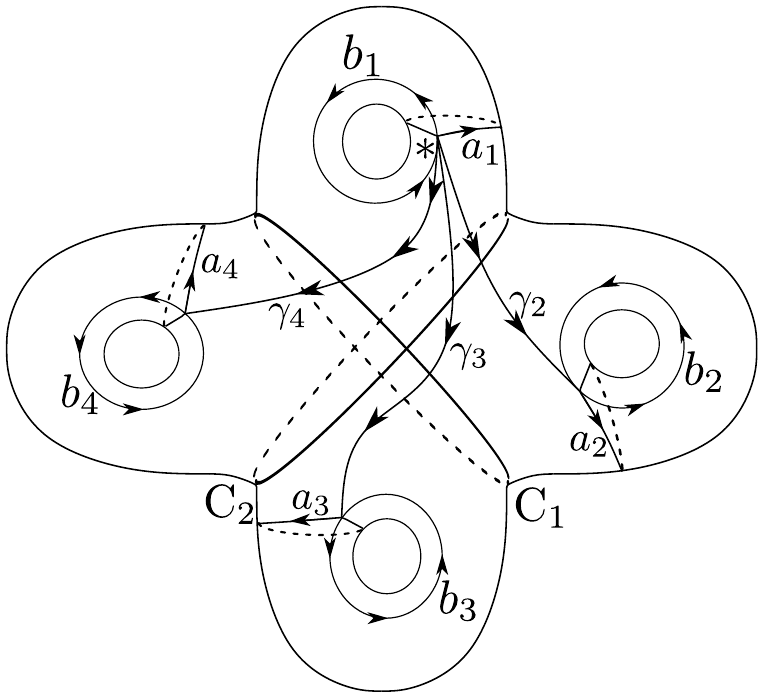}
\caption{Decomposition of a genus 4 surface $S_4$ in 4 handles of genus 1 by two simple closed curves $C_1,C_2$ meeting transversely at 2 points, and an adapted presentation for $\pi_1(S_4,*)$, in which $C_1=(a_1,b_1)(a_2,b_2), C_2=(a_1,b_1)(a_4,b_4)$. Note that $\tau_{C_1}, \tau_{C_2}$ both fix the classes $a_1,b_1$ of the handle containing the base point $*$, and that for every other handle with generators $a_i,b_i$ the effect of $\tau_{C_j}$ is the identity if the path $\gamma_i$ connecting the handle to the base point $*$ does not cross $C_j$, and conjugation by $C_j$ if the path $\gamma_i$ does cross $C_j$.}
\label{f:4anses}
\end{figure}

The separating Dehn twists $\tau_{C_1}, \tau_{C_2}$ are the identity modulo $\pi_1(S_4,*)_3$. Therefore, by Lemma \ref{l:ordre}, the sequence of brackets 
$$
(\tau_{C_1}, \tau_{C_2}), \quad (\tau_{C_1},(\tau_{C_1}, \tau_{C_2})), \quad (\tau_{C_1},(\tau_{C_1},(\tau_{C_1}, \tau_{C_2}))), \dots
$$
are the identity modulo $\pi_1(S_4,*)_l$ for $l=5,7,9,\dots$. The author conjectures that this is the maximal order of brackets for which they are the identity, even in $\cT_g$. Indeed, it is likely that $\tau_{C_1}, \tau_{C_2}$ generate a free subgroup of the Torelli group, thus any nontrivial bracket of length $l$ defined by iterative use of the two generators should be the identity modulo $\pi_1(S_4,*)_{2l+1}$, but not modulo $\pi_1(S_4,*)_{2l+2}$.
\end{ex}

\section{Symplectized mapping tori} \label{s:mt}

Let now $\phi : S_g \rightarrow S_g$ be an orientation--preserving diffeomorphism. Its {\em mapping torus} is the quotient topological space defined by the equivalence relation
$$
M_\phi= S_g \times [0,1]/ (x,0) \sim (\phi(x),1)
$$
for all $x \in S_g$. Classically known properties of these mapping tori include:

\begin{prop} \label{p:mt} 
\begin{enumerate}
\item $M_\phi$ is a closed, connected 3--manifold, endowed with a fibration $p : M_\phi \to S^1$ given by projection on the second factor before applying the equivalence relation, and oriented by the orientation of $S_g$ and the canonical orientation of $S^1$.
\item The diffeomorphism class of $M_\phi$ depends only on the difeotopy class of $\phi$. 
\item If we define the subspaces of vanishing cycles $Ev$ and of invariant cycles $Inv$ in $H_1(S_g; \bZ)$ by the exact sequence
$$
0 \longrightarrow Inv \longrightarrow H_1(S_g; \bZ) \stackrel{\phi_*-\text{Id}}{\longrightarrow} H_1(S_g; \bZ) \longrightarrow Ev \longrightarrow 0
$$
we have isomorphisms
\begin{alignat*}{1}
H_1(M_\phi; \bZ) &\cong H_1(S_g)/Ev \oplus \bZ [\sigma(S^1)] \\
H_2(M_\phi; \bZ) &\cong Inv \oplus \bZ [S_g]
\end{alignat*}
where $\sigma : S^1 \to M_\phi$ is a smooth section, and $[.]$ denotes the fundamental class in homology. The isomorphism in degree 2 is realized by the parallel transport of invariant cycles: for $c \in Inv$,
$$
tr(c)=[c \times [0,1]]+e \, ,
$$
where the first summand is the projection to the mapping torus of the fundamental class of $c \times [0,1] \subset S_g \times [0,1]$, and $e$ is a 2-chain in $S_g$ with boundary $de=\phi_*(c)-c$.
\item The fundamental group of the mapping torus $M_\phi$ is an HNN extension of the fundamental group of $S_g$, admitting a presentation
\begin{alignat*}{1}
\pi_1(M_\phi,*)= \langle & a_1,b_1, \dots, a_g, b_g, \sigma \, | \\
& (a_1,b_1) \cdots (a_g,b_g), \sigma c \sigma^{-1}= \phi_*(c) \quad \forall c \in \pi_1(S_g,*) \rangle \, ,
\end{alignat*}
where generator $\sigma$ is the class of a section $\sigma : S^1 \to M_\phi$, and $\phi_*$ is the morphism induced by the diffeomorphism $\phi$ in $\pi_1(S_g,*)$.
\end{enumerate} 
\end{prop}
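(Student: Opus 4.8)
The unifying object throughout is the fibration $p : M_\phi \to S^1$ with fiber $S_g$; parts (1) and (2) are smooth--topology statements about the gluing construction, while parts (3) and (4) are read off from the Wang exact sequence in homology and the homotopy exact sequence of $p$. For (1), $S_g \times [0,1]$ is a compact smooth 3--manifold with boundary $S_g\times\{0\}\sqcup S_g\times\{1\}$, and the relation $(x,0)\sim(\phi(x),1)$ glues these two boundary components by the diffeomorphism $\phi$, producing a smooth closed 3--manifold, connected because $S_g\times[0,1]$ is. The second--factor projection descends to $p : M_\phi\to[0,1]/(0\sim1)=S^1$, a locally trivial bundle with fiber $S_g$, and the product orientation of $S_g\times[0,1]$ passes to the quotient precisely because $\phi$ preserves orientation. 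For (2), a smooth isotopy $\Phi_t$ from $\phi_0$ to $\phi_1$, taken constant near $t=0,1$, induces a fiber--preserving diffeomorphism $M_{\phi_0}\to M_{\phi_1}$ by the standard shearing of the $[0,1]$--coordinate that intertwines the two gluing relations.

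For (3) I would feed $p$ into the Wang sequence
$$\cdots\to H_n(S_g)\xrightarrow{\phi_*-\mathrm{Id}}H_n(S_g)\to H_n(M_\phi)\to H_{n-1}(S_g)\xrightarrow{\phi_*-\mathrm{Id}}H_{n-1}(S_g)\to\cdots$$
Since $\phi$ is orientation--preserving of degree one, $\phi_*=\mathrm{Id}$ on $H_0(S_g)\cong\bZ$ and on $H_2(S_g)\cong\bZ$, so $\phi_*-\mathrm{Id}$ vanishes in those degrees. In degree one this yields $0\to\mathrm{coker}(\phi_*-\mathrm{Id})\to H_1(M_\phi)\to H_0(S_g)\to0$: the subobject is the cokernel $\mathrm{coker}(\phi_*-\mathrm{Id})$, matching the first summand $H_1(S_g)/Ev$ of the statement, the quotient is the free group $\bZ$ generated by $[\sigma(S^1)]$, and since $\bZ$ is free the extension splits. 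In degree two it yields $0\to H_2(S_g)\to H_2(M_\phi)\to\ker(\phi_*-\mathrm{Id})\to0$ with $\ker(\phi_*-\mathrm{Id})=Inv$ free abelian, so this extension splits as $Inv\oplus\bZ[S_g]$. The degree--two splitting must be made geometric: for $c\in Inv$ one has $\phi_*(c)-c=\partial e$ with $e$ a $2$--chain in $S_g$, the cylinder $c\times[0,1]$ acquires boundary $\phi_*(c)-c$ after the gluing, and hence $tr(c)=[c\times[0,1]]+e$ is a closed $2$--cycle projecting to $c$, realizing the splitting.

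For (4) I would use the homotopy exact sequence of $S_g\to M_\phi\xrightarrow{p}S^1$. As $\pi_2(S^1)=0$ and $S_g$ is connected it collapses to
$$1\to\pi_1(S_g,*)\to\pi_1(M_\phi,*)\to\pi_1(S^1)\cong\bZ\to1,$$
and the class $\sigma$ of a section splits it, so $\pi_1(M_\phi,*)\cong\pi_1(S_g,*)\rtimes\bZ$. Tracing a fiber loop around the $S^1$--direction shows that conjugation by $\sigma$ acts on the normal factor exactly as the monodromy $\phi_*$; adjoining to the one--relator presentation of $\pi_1(S_g,*)$ the stable letter $\sigma$ with relations $\sigma c\sigma^{-1}=\phi_*(c)$ then gives the stated HNN presentation.

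The main obstacle I anticipate is not a single hard computation but the geometric bookkeeping underlying the two splittings in (3): confirming that the cokernel and kernel of $\phi_*-\mathrm{Id}$ match the author's $Ev$ and $Inv$ under his conventions, and in particular that the trace class $tr(c)$ is genuinely closed and realizes the splitting $Inv\hookrightarrow H_2(M_\phi)$ --- which hinges on getting the orientation of $c\times[0,1]$ and the sign in $\phi_*(c)-c=\partial e$ mutually consistent. The parallel, convention--dependent point in (4) is checking that conjugation by $\sigma$ induces $\phi_*$ and not $\phi_*^{-1}$.
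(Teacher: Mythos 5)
Your proof is correct. Note that the paper itself offers no proof of this proposition: it is stated as a collection of classically known facts about mapping tori, so there is no argument to compare yours against line by line. What you wrote — gluing and shearing for (i)–(ii), the Wang exact sequence of the fibration over $S^1$ for (iii), and the homotopy exact sequence plus the section for the semidirect--product/HNN presentation in (iv) — is exactly the standard argument the paper implicitly invokes. Two points you handled well and that are worth keeping explicit: first, the paper's notation ``$H_1(S_g)/Ev$'' is an abuse (the exact sequence defines $Ev$ as the cokernel of $\phi_*-\mathrm{Id}$, yet it is called a subspace of $H_1(S_g;\bZ)$ and then quotiented by); your reading of that summand as $\operatorname{coker}(\phi_*-\mathrm{Id})$, i.e.\ $H_1(S_g)$ modulo the image of $\phi_*-\mathrm{Id}$, is the only consistent one and matches the trace construction in degree $2$. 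Second, the splittings you invoke do need the freeness observations you made ($\bZ$ free as quotient in degree $1$; $Inv$ free abelian as a subgroup of $\bZ^{2g}$ in degree $2$), and the sign bookkeeping you flag for $\partial\bigl(c\times[0,1]\bigr)=\pm(\phi_*(c)-c)$ is genuinely convention-dependent but harmless, since either sign makes $tr(c)$ a cycle mapping to $c$ under the Wang boundary map.
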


Note that property (ii) allows us to consider the mapping tori as defined by elements of the mapping class group of $S_g$, and change the representative diffeomorphism for the class. 

Let us bring these mapping tori to the symplectic category:

\begin{prop} (Thurston)
The symplectized mapping torus of a diffeomorphism $\phi : S_g \to S_g$ is $X_\phi = M_\phi \times S^1$ is a closed, connected 4--manifold admitting a symplectic structure $\omega$ such that the natural projection 
$p \times \text{Id} : X_\phi \to \bT^2=S^1 \times S^1$ is a symplectic fibration.
\end{prop}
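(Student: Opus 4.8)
The plan is to realize this as an instance of Thurston's construction of symplectic forms on fibrations (the same mechanism behind the Kodaira--Thurston manifold), applied to $p \times \mathrm{Id} : X_\phi \to \bT^2$, whose fibers are copies of $S_g$. Since $M_\phi$ is a closed connected $3$--manifold by Proposition \ref{p:mt}, $X_\phi = M_\phi \times S^1$ is a closed connected $4$--manifold, so only the symplectic form must be produced. Thurston's criterion asks for three things: a symplectic base, a symplectic fiber, and a closed $2$--form on the total space restricting to a symplectic (area) form on each fiber. The base $\bT^2 = S^1 \times S^1$ carries the standard area form $\omega_{\bT^2}$, and the fiber $S_g$, being a closed oriented surface, carries an area form $\eta$; so the only point requiring work is the construction of the \emph{fiberwise--symplectic closed $2$--form}.

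First I would build such a form on the $3$--manifold $M_\phi$ and then pull it back to $X_\phi = M_\phi \times S^1$. Writing $M_\phi = S_g \times [0,1]/(x,0)\sim(\phi(x),1)$, a closed $2$--form restricting to an area form on the fibers amounts to a path $\eta_t$ of area forms on $S_g$ together with a $1$--form $\beta$ satisfying the closedness relation $\partial_t \eta_t = d_{S_g}(-\beta)$, subject to the gluing constraint $\eta_0 = \phi^*\eta_1$. Because $\phi$ is orientation--preserving of degree one, $\phi^*\eta$ and $\eta$ are cohomologous positive area forms, so $\phi^*\eta - \eta = d\beta$ for some $\beta$; the convex interpolation $\eta_t = t\,\eta + (1-t)\,\phi^*\eta = \eta + (1-t)\,d\beta$ is then a path of area forms with $\eta_0 = \phi^*\eta$, $\eta_1 = \eta$ and $\partial_t \eta_t = -\,d\beta$ exact. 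Hence $\tau = \eta_t - dt \wedge \beta$ is closed, descends to $M_\phi$, and restricts to $\eta_t$ on each fiber. Equivalently, the Wang sequence of $M_\phi \to S^1$ shows $\phi^* - \mathrm{Id} = 0$ on $H^2(S_g;\bR) \cong \bR$, so $[\eta]$ lies in the image of $H^2(M_\phi;\bR) \to H^2(S_g;\bR)$, which is exactly Thurston's cohomological hypothesis.

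Let $\tilde\tau$ be the pullback of $\tau$ under $X_\phi = M_\phi \times S^1 \to M_\phi$, and set $\omega = \tilde\tau + K\,(p\times\mathrm{Id})^*\omega_{\bT^2}$. This is closed, and since $p\times\mathrm{Id}$ collapses each fiber $S_g$ to a point, its restriction to a fiber is $\tilde\tau|_{S_g} = \eta_t$, an area form; thus once nondegeneracy is verified the fibers are symplectic submanifolds and $p\times\mathrm{Id}$ is a symplectic fibration. For nondegeneracy in dimension $4$ it suffices to compute $\omega^2$. Since $\tilde\tau$ is pulled back from the $3$--manifold $M_\phi$ we have $\tilde\tau^2 = 0$, and since the base is a surface $\bigl((p\times\mathrm{Id})^*\omega_{\bT^2}\bigr)^2 = 0$, so $\omega^2 = 2K\,\tilde\tau \wedge (p\times\mathrm{Id})^*\omega_{\bT^2}$. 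In fiber--and--base coordinates $(x,t,\theta)$ the wedge only sees the fiberwise part $\eta_t$ of $\tilde\tau$ (the $dt\wedge\beta$ term dies against $dt\wedge d\theta$), giving $\eta_t \wedge dt \wedge d\theta$, a volume form. Hence $\omega^2 \neq 0$ everywhere for any $K>0$ and $\omega$ is symplectic.

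The main obstacle is the construction of the closed fiberwise--area form, i.e. the verification of Thurston's cohomological hypothesis; once this is in place everything else is formal. The essential input is only that $\phi$ is orientation--preserving, which forces $\phi^*$ to act trivially on $H^2(S_g;\bR)$ and makes $\phi^*\eta$ cohomologous to $\eta$; this is precisely where the Moser--type interpolation argument enters, and it is the sole place where the specific geometry of the mapping torus is used.
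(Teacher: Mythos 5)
Your strategy is the right one (Thurston's trick applied to $p\times\text{Id}$, exactly as in the paper), and your final nondegeneracy computation $\omega^2 = 2K\,\tilde\tau\wedge(p\times\text{Id})^*\omega_{\bT^2}\neq 0$ is correct; in this product situation any $K>0$ indeed works since $\tilde\tau^2=0$. But there is a genuine gap at the one step you yourself identify as the crux: the form $\tau=\eta_t-dt\wedge\beta$ you build on $S_g\times[0,1]$ does \emph{not} descend to $M_\phi$. Forms on $M_\phi$ are the same thing as forms on the infinite cyclic cover $S_g\times\bR$ invariant under the deck transformation $T(x,t)=(\phi(x),t+1)$; writing such a form as $\eta_t+dt\wedge\alpha_t$, invariance forces \emph{both} $\phi^*\eta_{t+1}=\eta_t$ \emph{and} $\phi^*\alpha_{t+1}=\alpha_t$, together with smooth matching of all $t$--derivatives across the seam. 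Your gluing constraint records only the first condition at the endpoints, $\eta_0=\phi^*\eta_1$. The second condition fails for your data: you take $\alpha_t\equiv-\beta$ constant in $t$, so invariance would require $\phi^*\beta=\beta$, which nothing guarantees (only $d\beta=\phi^*\eta-\eta$ is prescribed); hence the $dt\wedge\beta$ component of $\tau$ is discontinuous across the seam of $M_\phi$. The fiberwise part also has a corner there: the left $t$--derivative of $\eta_t$ at $t=1$ is $-d\beta$, while equivariance forces the right derivative to be $-(\phi^{-1})^*d\beta$. So the construction breaks exactly at the point where, in your words, the specific geometry of the mapping torus is used; your Wang--sequence remark correctly verifies Thurston's cohomological hypothesis, but passing from that hypothesis to an honest closed fiberwise--area form is precisely the content that still has to be supplied.

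The gap is filled by either of two standard devices. (a) Insert a cutoff: choose $f:[0,1]\to[0,1]$ smooth with $f\equiv 1$ near $t=0$ and $f\equiv 0$ near $t=1$, and set $\tau=\eta+d\bigl(f(t)\beta\bigr)$. This is closed, restricts on each fiber to $\eta+f(t)\,d\beta=(1-f(t))\,\eta+f(t)\,\phi^*\eta$, an area form, and near the seam it equals $\eta$ on one side and $\phi^*\eta$ on the other with no $dt$--component, so it is $T$--invariant and descends smoothly. (b) The paper's route, which avoids interpolation altogether: since $\int_{S_g}\phi^*\omega_S=\int_{S_g}\omega_S$, Moser's theorem gives an area--preserving diffeomorphism isotopic to $\phi$, and by Proposition \ref{p:mt}(ii) the mapping torus depends only on the isotopy class, so one may assume $\phi^*\omega_S=\omega_S$ outright; then $\omega_S$ itself descends to $M_\phi$. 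With either repair, the rest of your argument goes through verbatim.
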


\begin{proof}
The manifolds $X_\phi$ are oriented closed, connected 4--manifolds by their definition. 

Select now an area form $\omega_S$ on $S_g$. There is a diffeomorphism in the mapping class of $\phi$ which preserves $\omega_S$, so we can assume that this is our selected $\phi$. Now, the fact that $\phi^* \omega_S= \omega_S$ allows us to define a 2--form $\omega_0$ in $M_\phi$ by descending $\omega_S$ from $S_g \times [0,1]$ to the quotient defining $M_\phi$. If we denote $p_1 : M_\phi \times S^1 \to M_\phi$, and $\omega_{\bT^2}$ the standard symplectic form in $\bT^2=S^1 \times S^1$, then Thurston's trick tells us that 
$$
\omega_K = p_1^* \omega_0 + K (p \times \text{Id})^* \omega_{\bT^2}
$$
is a symplectic form on $X_\phi$, making $p \times \text{Id}$ a symplectic fibration, for large enough $K$.
\end{proof}

\bigskip

We will be interested in the mapping tori of diffeomorphisms $\phi : S_g \to S_g$ belonging to the Torelli group, and particularly in their comparison to the case $\phi=\text{Id}$, for which $X_{\text{Id}}= S_g \times \bT^2$, $\pi_1(X_{\text{Id}},*) \cong \pi_1(S_g,*) \times \bZ^2$, $\pi_1(X_{\text{Id}},*)/ \pi_1(X_{\text{Id}},*)_l \cong \pi_1(S_g,*)/ \pi_1(S_g,*)_l \times \bZ^2$, and $\cL_{l}(\pi_1(X_{\text{Id}},*)) \cong \cL_{l}(\pi_1(S_g,*)) \oplus k^2$. 

\begin{prop} \label{p:unip}
Let $\phi \in \cT_g$ such that $\phi_*=\text{Id}$ in the nilpotent quotient $\pi_1(S_g,*)/ \pi_1(S_g,*)_n$. Then
\begin{enumerate}
\item If $n \ge 3$, there is an isomorphism of cohomology algebras $H^*(X_\phi; \bZ) \cong H^*(S_g \times \bT^2; \bZ)$.
\item There are isomorphisms for $l \le n$:
$$
\pi_1(X_\phi,*)/ \pi_1(X_\phi,*)_l \cong \pi_1(S_g \times \bT^2,*)/ \pi_1(S_g \times \bT^2,*)_l
$$
and 
$$
\cL_{l-1}(\pi_1(X_\phi,*)) \cong \cL_{l-1}(\pi_1(S_g \times \bT^2))
$$ 
\item If moreover $\phi_* \neq \text{Id}$ in $\pi_1(S_g,*)/ \pi_1(S_g,*)_{n+1}$, then for some $N \ge n+1$
$$
\pi_1(X_\phi,*)/ \pi_1(X_\phi,*)_N \not\cong \pi_1(S_g \times \bT^2,*)/ \pi_1(S_g \times \bT^2,*)_N
$$
and 
$$
\cL_{N}(\pi_1(X_\phi,*)) \not\cong \cL_{N}(\pi_1(S_g \times \bT^2))
$$
\end{enumerate}
\end{prop}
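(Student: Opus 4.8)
The plan is to route everything through the $k$--Malcev algebra of the mapping torus. Write $\Gamma=\pi_1(S_g,*)$ and $\cL=\cL(\Gamma,k)$. By Proposition \ref{p:mt}(iv) the fundamental group of $M_\phi$ is the semidirect product $\Gamma\rtimes_{\phi_*}\bZ$, and $\pi_1(X_\phi,*)=\pi_1(M_\phi,*)\times\bZ$. Since $\phi\in\cT_g$ acts unipotently on each $\cL_m(\Gamma,k)$ (Theorem \ref{t:asada}), the logarithm $D=\log\phi_*$ is a well--defined nilpotent derivation of $\cL$ and the one--parameter group $\exp(tD)$ integrates the $\bZ$--action; hence the Malcev functor sends the mapping torus to the semidirect product of pro--nilpotent Lie algebras
$$\cL(\pi_1 M_\phi,k)\cong\cL\rtimes_D k\,,\qquad \cL(\pi_1 X_\phi,k)\cong(\cL\rtimes_D k)\oplus k\,,$$
the last summand central. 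The quantitative input I would record first is that, because $\phi$ acts as the identity on $\mathrm{gr}\,(\Gamma)\otimes k$ (the graded action factors through $\Gamma/\Gamma_2\otimes k$, on which Torelli is trivial) and $\phi_*=\mathrm{Id}$ on $\Gamma/\Gamma_n$, the derivation $D$ raises the lower central series degree by at least $n-1$; the hypothesis of (iii) says precisely that its leading component, a nonzero map $\Gamma/\Gamma_2\otimes k\to\Gamma_n/\Gamma_{n+1}\otimes k$ of degree--shift $n-1$, does not vanish.

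Parts (i) and (ii) then follow formally. For (ii) I would compute the lower central series of $G=\Gamma\rtimes_{\phi_*}\bZ$ directly: since $(\sigma,c)=\phi_*(c)c^{-1}$ lies in progressively deeper filtration terms (because $\phi_*$ is the identity on the associated graded), one gets $G_l=\Gamma_l$ for all $l\ge2$ and $G/G_l\cong(\Gamma/\Gamma_l)\rtimes_{\bar\phi}\bZ$; for $l\le n$ one has $\bar\phi=\mathrm{Id}$ on $\Gamma/\Gamma_l$, so this is the direct product $(\Gamma/\Gamma_l)\times\bZ$, and adjoining the central $\bZ$ yields the stated isomorphisms of nilpotent quotients and, dually, of the $\cL_{l-1}$. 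Equivalently $D$ kills the truncation $\cL_{l-1}$ for $l\le n$, as it shifts into degrees $\ge n>l-1$. Part (i) is the degree--$3$ instance: an aspherical closed $3$--manifold has its cohomology ring determined by $\pi_1/(\pi_1)_3$, as $\wedge^2 H^1\to H^2$ is dual to the bracket $\Gamma_2/\Gamma_3\to\wedge^2(\Gamma/\Gamma_2)$ and $H^1\otimes H^2\to H^3$ is Poincar\'e duality; so the Wang sequence (with $\phi^*=\mathrm{Id}$ on $H^*(S_g)$) together with the $l=3\le n$ case of (ii) gives $H^*(M_\phi;\bZ)\cong H^*(S_g\times S^1;\bZ)$ as rings, and K\"unneth yields (i).

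The substance is (iii): I must show the nonzero derivation $D$ cannot be absorbed by an isomorphism onto the trivial model $\cL\oplus k^2$. The mechanism is the centre. In $\cL\oplus k^2$ both extra generators are central, so the image of the centre in the abelianisation meets the $2$--plane spanned by the two circle classes in dimension $2$. In $(\cL\rtimes_D k\partial)\oplus k\epsilon$ the class $\epsilon$ is central but $\partial$ is not, since $[\partial,x]=D(x)\ne0$ for some $x$; once $N$ is large enough that this value survives in $\cL_N$, no element can be added to $\partial$ to centralise it, and the image of the centre meets that $2$--plane only in $k\epsilon$. The dimension of the image of the centre in $H_1$ is an isomorphism invariant of the nilpotent quotient (and of its Malcev algebra), so for such $N\ge n+1$ the quotients $\pi_1(X_\phi)/(\pi_1 X_\phi)_N$ and $\cL_N(\pi_1 X_\phi)$ differ from the product model.

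The single step that makes the previous paragraph work, and the main obstacle, is the claim that $D$ is \emph{not} an inner derivation: if $D=\mathrm{ad}(Z)$ then $\partial-Z$ is central and $\cL\rtimes_D k\cong\cL\oplus k$, so the two models would coincide. Thus (iii) is really the assertion that $\phi$ is nontrivial in $\mathrm{Out}(\cL)=\aut(\cL)/\mathrm{Inn}(\cL)$, not merely that $D\ne0$. I would deduce non--innerness from the faithfulness of the Torelli action (Theorem \ref{t:asada}): the sequence $1\to\Gamma\to\cT_{g,1}\to\cT_g\to1$ identifies $\Gamma$ with the inner automorphisms coming from the discrete group, and the remaining point is that $\Gamma$ is its own normaliser inside its pro--unipotent completion $\Gamma\otimes k$, so any $\psi\in\cT_{g,1}$ acting as conjugation by some $g\in\Gamma\otimes k$ already lies in $\Gamma$. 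This self--normalising property of hyperbolic surface groups in their Malcev completion is the delicate input, and it is exactly where the nontriviality of $\phi$ in $\cT_g$ --- known for the separating twist and conjectural for the iterated brackets of Example \ref{ex:1nformals} --- must enter, and where I expect the real work to lie.
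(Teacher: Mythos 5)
Your parts (i) and (ii) are correct and follow essentially the paper's own route: for (ii) the paper observes that the HNN presentation of $\pi_1(M_\phi,*)$ in Prop.~\ref{p:mt}(iv) coincides with that of $S_g \times S^1$ modulo brackets of order $n$ (your explicit computation $G_l=\Gamma_l$ for $l\ge 2$ and $G/G_l \cong (\Gamma/\Gamma_l)\rtimes_{\bar\phi}\bZ$ is a more detailed rendering of the same fact), and for (i) the paper likewise combines trivial monodromy on $H^*(S_g;\bZ)$, the identification of $\ker\bigl(\cup : \wedge^2 H^1 \to H^2\bigr)$ with $\text{Hom}(\Gamma_2/\Gamma_3,\bZ)$, Poincar\'e duality and K\"unneth. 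Part (iii) also runs on the same mechanism as the paper's proof: the center of the pro--unipotent completion (equivalently of the Malcev algebra) is $2$--dimensional for $S_g\times\bT^2$ and is claimed to be $1$--dimensional for $X_\phi$, and this difference must already be visible at some finite stage $N$.

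The genuine gap is the one you flagged yourself: everything in (iii) hinges on $D=\log\phi_*$ not being an \emph{inner} derivation of $\cL$, equivalently on $\phi_*\otimes k$ not being conjugation by an element of $\Gamma\otimes k$, and you do not prove this; you only reduce it to the self--normalization $N_{\Gamma\otimes k}(\Gamma)=\Gamma$ and leave that as ``the real work.'' The reduction is genuinely needed and genuinely nontrivial: if $D=\text{ad}(Z)$ then $\cL\rtimes_D k\cong\cL\oplus k$ and every conclusion of (iii) fails, even though $D\neq 0$; and non--innerness does \emph{not} follow from the displayed hypothesis of (iii), because conjugation by an element of $\Gamma_{n-1}\otimes k$ lying outside (the image of) $\Gamma$ also satisfies ``$\neq\text{Id}$ mod $\Gamma_{n+1}$, $=\text{Id}$ mod $\Gamma_n$'' while producing a completion isomorphic to the product one. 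So the hypothesis alone cannot yield the conclusion without an arithmetic input separating $\Gamma$--points from $k$--points, i.e.\ exactly the statement you left unproved (faithfulness of $\cT_g \to \text{Out}$ of the completion, which is the outer strengthening of Theorem~\ref{t:asada}; given that the title of \cite{Asa} concerns outer automorphism groups, this is plausibly what that reference actually provides, but it must be invoked). Two smaller inaccuracies: your claim that once $D(x)\neq 0$ survives in $\cL_N$ ``no element can be added to $\partial$ to centralise it'' is false as stated --- a nonzero derivation of $\cL_N$ can perfectly well be inner; what is needed, and what follows from pro--level non--innerness by an inverse--limit argument with the finite--dimensional affine solution spaces $\{z : \text{ad}(z)=D_N\}$, is that $D_N$ is non--inner for some finite $N$. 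Also, unipotence of the Torelli action comes from the block--triangular structure of the action on $\cL_n(\Gamma,k)$, not from Theorem~\ref{t:asada}, which is faithfulness. In fairness to you, the paper's own proof of (iii) passes over the very same point: it deduces ``the unipotent completion $\pi_1(M_\phi,*)\otimes k$ has trivial center'' directly from ``$\phi_*\neq\text{Id}$ modulo $\Gamma_N$ for all $N\ge n+1$,'' which only rules out central elements of the form $(1,t)$ and silently excludes the mixed central elements $(x,t)$ that exist precisely when $\phi_*\otimes k$ is inner; so your proposal has in fact isolated the step on which the published argument also rests by assertion.
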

 
\begin{proof}
(ii) If the induced isomorphism $\phi_*$ is the identity in $\pi_1(S_g,*)/ \pi_1(S_g,*)_n$, then the presentation of $\pi_1(M_\phi,*)$ in Prop. \ref{p:mt} (iv) is exactly the same for $\phi, \text{Id}$ modulo brackets of order $n$, so it readily yields for $l \ge n$ isomorphisms $\pi_1(M_\phi,*)/ \pi_1(M_\phi,*)_l \cong \pi_1(M_{\text{Id}},*)/ \pi_1(M_{\text{Id}},*)_l$, where $M_{\text{Id}} = S_g \times S^1$. This isomorphism of nilpotent quotients is preserved by multiplication of the mapping tori by $S^1$, carries over to the unipotent completions $\pi_1(X,*)/ \pi_1(X,*)_l \otimes k$ and to their nilpotent Malcev algebras $\cL_{l-1}(\pi_1(X,*),k)$.

(iii) The presentations found in \cite{Lab} for the graded Lie algebras $\text{gr}\, (\pi_1(S_g,*))$ mean that the center of the unipotent completion $\pi_1(M_{\text{Id}},*) \otimes k$ is the 1--parameter subgroup generated by $\sigma$. On the other hand, if $\phi_* \neq \text{Id}$ modulo $\pi_1(S_g,*)_{n+1}$ then $\phi_* \neq \text{Id}$ modulo $\pi_1(S_g,*)_N$ for all $N \ge n+1$, thus the unipotent completion $\pi_1(M_\phi,*) \otimes k$ has trivial center. Therefore the unipotent completions $\pi_1(M_{\text{Id}},*) \otimes k, \pi_1(M_\phi,*) \otimes k$ cannot be isomorphic, which means that $\pi_1(M_{\text{Id}},*) /\pi_1(M_{\text{Id}},*)_N \otimes k, \pi_1(M_\phi,*) /\pi_1(M_\phi,*)_N \otimes k$ stop being isomorphic at some $N>n$. This non--isomorphism carries over to the symplectized tori $X_\phi, X_{\text{Id}}= S_g \times \bT^2$, e.g. because the center of the pro--unipotent completion of $\pi_1$ is 1--dimensional in $X_\phi$ and 2--dimensional in $S_g \times \bT^2$. 

(i) If $\phi \in \cT_g$ the monodromy of the mapping torus in $H^*(S_g; \bZ)$ is trivial, so by the Leray--Hirsch theorem there is an isomorphism of graded groups 
\begin{equation} \label{eq:isgr}
H^*(M_\phi; \bZ) \cong H^*(S^1; \bZ) \otimes H^*(S_g; \bZ) \, . 
\end{equation}
If, moreover, the induced isomorphism $\phi_*$ is the identity in $\pi_1(S_g,*)/ \pi_1(S_g,*)_3$, then the presentation of $\pi_1(M_\phi,*)$ in Prop. \ref{p:mt} (iv) readily yields an isomorphism $\pi_1(M_\phi,*)/ \pi_1(M_\phi,*)_3 \cong \pi_1(M_{\text{Id}},*)/ \pi_1(M_{\text{Id}},*)_3$, where $M_{\text{Id}} = S_g \times S^1$. 

Let us recall now that for any topological space $M$ there are isomorphisms $H^1(M; \bZ) \cong \text{Hom}\, \left( \pi_1(M,*)/ \pi_1(M,*)_2, \bZ \right)$ and 
$$
\ker \left( \cup : \wedge^2 H^1(M; \bZ) \rightarrow H^2(M; \bZ) \right) \cong \text{Hom}\, \left( \pi_1(M,*)_2/ \pi_1(M,*)_3, \bZ \right)
$$ 
given by the Lyndon/Hochschild--Serre spectral sequence of the action of $\pi_1(M,*)$ on itself by inner automorphisms (see \cite{Wei}). This means that the graded group isomorphisms of (\ref{eq:isgr}) extend to isomorphisms of cup products making commutative the diagram
$$
\begin{array}{ccc}
\wedge^2 H^1(M_\phi; \bZ) & \longrightarrow & H^2(M_\phi; \bZ) \\
\downarrow &                                & \downarrow \\
\wedge^2 H^1(M_{\text{Id}}; \bZ) & \longrightarrow & H^2(M_{\text{Id}}; \bZ)
\end{array}
$$
Poincar\'e duality completes the isomorphism of cohomology algebras between $M_\phi$ and $M_{\text{Id}} = S_g \times S^1$, and the K\"unneth formula extends the isomorphism to $X_\phi, S_g \times \bT^2$.
 \end{proof}

The symplectized Torelli mapping tori have a topology which is very different from that of K\"ahler families of compact Riemann surfaces fibered over an elliptic curve, as the latter are suspensions with nontrivial monodromy in the cohomology of the fibers (\cite{AMN}). There is a stronger obstruction to endowing $X_\phi$ with any holomorphic structure:

\begin{thm} \label{t:main}
Let $X_\phi$ be the symplectized mapping torus for $\phi : S_g \to S_g$ such that $\phi_*=\text{Id}$ in $\pi_1(S_g,*)/ \pi_1(S_g,*)_n$ for $n \ge 3$, but $\phi_* \neq \text{Id}$ in the mapping class group $M(g,0)$. Then $X_\phi$ is $(1,n)$--formal, but it is not 1--formal.
\end{thm}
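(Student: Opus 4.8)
The plan is to prove the two assertions separately, in both cases using the product $S_g \times \bT^2$ --- a product of compact Riemann surfaces, hence a compact Kähler (indeed projective) manifold --- as a formal reference model, and transporting information along the comparison isomorphisms of Proposition \ref{p:unip}.

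For the $(1,n)$--formality I would argue that formality is simply inherited from $S_g\times\bT^2$. Since $S_g\times\bT^2$ is Kähler it is formal by \cite{DGMS}, hence $(1,l)$--formal for every $l$: each of its $(1,l)$--minimal models, equivalently each truncated Malcev algebra $\cL_l(\pi_1(S_g\times\bT^2))$, is computed from the cohomology algebra $H^*(S_g\times\bT^2;k)$. By Proposition \ref{p:unip}(i) the cohomology algebras of $X_\phi$ and $S_g\times\bT^2$ are isomorphic, and by Proposition \ref{p:unip}(ii) their $(1,l)$--minimal models agree in the whole range relevant to $(1,n)$--formality. Since $(1,l)$--formality is precisely the assertion that the $(1,l)$--minimal model is recovered from the cohomology algebra, and both the minimal model and that algebra coincide with those of the formal manifold $S_g\times\bT^2$ in this range, $X_\phi$ inherits $(1,n)$--formality.

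For the failure of 1--formality the key is to show that the Malcev algebra of $\pi_1(X_\phi,*)$ genuinely diverges from that of the reference model at some finite stage. First I would invoke Theorem \ref{t:asada}: a representative automorphism $\phi_*\in\cT_{g,1}$ of the mapping class $\phi$ acts nontrivially on $\cL(\Gamma,k)$ unless $\phi_*$ is the identity automorphism; since $\phi\neq\mathrm{Id}$ in $M(g,0)=\text{Out}^+(\Gamma)$ no representative is trivial, so $\phi_*$ is nontrivial on $\cL(\Gamma,k)$ and hence on some $\cL_m(\Gamma,k)$. Consequently there is a largest index $m_0\ge n$ with $\phi_*=\mathrm{Id}$ modulo $\pi_1(S_g,*)_{m_0}$, while $\phi_*\neq\mathrm{Id}$ modulo $\pi_1(S_g,*)_{m_0+1}$. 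Applying Proposition \ref{p:unip}(iii) with $m_0$ in place of $n$ then produces an index $N$ with $\cL_N(\pi_1(X_\phi,*))\not\cong\cL_N(\pi_1(S_g\times\bT^2))$; concretely, as in the proof of that proposition, $\phi_*$ moving $\cL(\Gamma,k)$ makes $\sigma$ noncentral, so the center of the pro--unipotent completion of $\pi_1(X_\phi,*)$ is $1$--dimensional (only the extra $S^1$ factor survives, $\cL(\Gamma,k)$ itself having trivial center), whereas for $S_g\times\bT^2$ it is $2$--dimensional.

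Finally I would close by contradiction: were $X_\phi$ 1--formal, then for every $N$ the algebra $\cL_N(\pi_1(X_\phi,*))$ would be computed from $H^*(X_\phi;k)$; but by Proposition \ref{p:unip}(i) this algebra is isomorphic to $H^*(S_g\times\bT^2;k)$, which by formality of $S_g\times\bT^2$ computes $\cL_N(\pi_1(S_g\times\bT^2))$, forcing $\cL_N(\pi_1(X_\phi,*))\cong\cL_N(\pi_1(S_g\times\bT^2))$ for all $N$ and contradicting the previous step. I expect the main obstacle to be exactly this non--formality half, and within it the deduction that a mapping class nontrivial merely in $M(g,0)$ must perturb the Malcev algebra of the mapping torus: this is where Asada's faithfulness theorem and the pro--unipotence of $\cT_{g,1}$ are indispensable, since residual nilpotence of $\pi_1(S_g,*)$ alone would not exclude an outer automorphism acting trivially on every nilpotent quotient.
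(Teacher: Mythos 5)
Your proof is correct and follows essentially the same route as the paper: $(1,n)$--formality is transported from the formal K\"ahler model $S_g\times\bT^2$ via Proposition \ref{p:unip}(i)--(ii), and failure of 1--formality follows from Proposition \ref{p:unip}(iii) once one locates the first nilpotent quotient of $\pi_1(S_g,*)$ on which $\phi_*$ acts nontrivially, then compares the resulting Malcev algebra of $X_\phi$ with the one the common cohomology algebra would dictate. Your only real addition is making explicit the appeal to Theorem \ref{t:asada} to guarantee that such a first quotient exists when $\phi\neq\text{Id}$ in $M(g,0)$, a step the paper's proof invokes only implicitly.
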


\begin{proof}
The unipotent completions $\pi_1(X_\phi,*)/ \pi_1(X_\phi,*)_l \otimes k$ are isomorphic to those of $X_{\text{Id}} = S_g \times \bT^2$ for $l \le n$. The latter manifold is the product of two compact connected Riemann surfaces, thus is formal (\cite{DGMS}) and its $(1,l)$--minimal models are those of its cohomology algebra $H^*(S_g \times \bT^2;k)$. As there is an isomorphism of cohomology algebras $H^*(S_g \times \bT^2; k) \cong H^*(X_\phi,k)$, this means that for $l \le n$ the $(1,l)$--minimal models of $X_\phi$ are those of its cohomology algebra, i.e. $X_\phi$ is $(1,n)$--formal.

If $\phi \neq \text{Id}$ in $\cT_g$, then for some $M>n$ it must happen that $\phi_* \neq \text{Id}$ on $\pi_1(S_g,*)/ \pi_1(S_g,*)_M$. Choose as $M$ the first such value. Then, by Prop. \ref{p:unip} (iii), for some $N \ge M$ the Malcev algebra $\cL_N(\pi_1(X_\phi,*),k)$ is not isomorphic to that of $S_g \times \bT^2$. But the cohomology algebra of $X_\phi$ is isomorphic to that of $S_g \times \bT^2$, so the $(1,N)$--minimal model of $H^*(X_\phi;k)$ is dual to the Malcev algebra of $S_g \times \bT^2$, not to the Malcev algebra of $X_\phi$. Therefore $X_\phi$ is not $(1,N)$--formal, and also not 1--formal.
\end{proof}

The consequences of Thm. \ref{t:main} for Massey products of 1--cohomology classes, for whose definition we follow \cite{Dwy}, are:

\begin{cor}
Let $X_\phi$ be the symplectized mapping torus for $\phi : S_g \to S_g$ such that $\phi_*=\text{Id}$ in $\pi_1(S_g,*)/ \pi_1(S_g,*)_n$ for $n \ge 3$, but $\phi_* \neq \text{Id}$ in the mapping class group $M(g,0)$. Then all Massey products of 1--cohomology classes vanish up to length $n$, but there exists a nonvanishing one of length $N>n$.
\end{cor}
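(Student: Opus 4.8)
The plan is to read off the statement from Theorem \ref{t:main} by translating $(1,l)$--formality into the vanishing of Massey products of $1$--cohomology classes, using the dictionary between the truncated $1$--minimal model (equivalently the Malcev Lie algebra $\cL_l(\pi_1(X_\phi,*),k)$) and such Massey products that goes back to Dwyer \cite{Dwy}. The guiding principle is that the stage--$l$ $1$--minimal model records precisely those Massey products of classes in $H^1$ whose length is at most $l$: the defining systems for such a product are assembled from the minimal model generators introduced at stages $\le l$, and the product is obstructed to be nonzero exactly when the corresponding stage of the Malcev Lie algebra differs from the one prescribed by $H^1$ together with the cup product $\wedge^2 H^1 \to H^2$.

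First I would establish the vanishing. By Theorem \ref{t:main}, $X_\phi$ is $(1,n)$--formal, so for every $l \le n$ its $(1,l)$--minimal model is isomorphic to the one computed from the cohomology algebra $H^*(X_\phi;k) \cong H^*(S_g \times \bT^2;k)$. Since $S_g \times \bT^2$ is a product of compact Riemann surfaces it is formal \cite{DGMS}, so every Massey product of $1$--classes vanishes in its (formal) model. Because Massey products of $1$--classes of length $\le n$ are computed inside the $(1,n)$--minimal model, and this model is carried isomorphically onto the formal one, all such products vanish in $X_\phi$ as well.

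For the existence of a nonzero product I would use that $X_\phi$ is \emph{not} $1$--formal. Let $N$ be the least integer for which the Malcev Lie algebra $\cL_N(\pi_1(X_\phi,*),k)$ fails to be isomorphic to that of $S_g \times \bT^2$; by Proposition \ref{p:unip}(iii) together with Theorem \ref{t:main} such an $N$ exists and $N > n$. Since $X_\phi$ is $(1,N-1)$--formal, all Massey products of $1$--classes of length $< N$ vanish, so every length--$N$ Massey product is defined. If all of them vanished as well, the stage--$N$ extension of the minimal model would be forced to coincide with the one computed from $H^*(X_\phi;k)$, contradicting the non--isomorphism of Malcev Lie algebras at level $N$. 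Hence at least one length--$N$ Massey product of $1$--cohomology classes is nonzero in $H^2(X_\phi;k)$.

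The hard part will be the backward implication: guaranteeing that the failure of formality at level $N$ is genuinely detected by a nonvanishing Massey product rather than by some subtler invariant. In general the vanishing of all ordinary Massey products need not imply formality, and one must allow matric (matrix) Massey products to obtain a clean equivalence stage by stage. The precise assertion I would invoke is that, for degree--one classes, the family of (matric) Massey products of length $\le l$ forms a complete obstruction to the $(1,l)$--minimal model being formal, which is the content I would extract from Dwyer's filtration \cite{Dwy}. Accordingly I would phrase the product in the second part as a matric Massey product where necessary, noting that in the explicit cases at hand --- such as the separating Dehn twist, whose obstruction is carried by a single Lie bracket --- an ordinary, non--matric Massey product already witnesses the non--formality.
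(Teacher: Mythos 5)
Your first half (vanishing of all Massey products of $1$--classes up to length $n$) is exactly the paper's argument: $(1,n)$--formality from Theorem \ref{t:main}, formality of $S_g \times \bT^2$, and the isomorphism of cohomology algebras. The gap is in the second half. Your argument hinges on the converse implication: if every length--$N$ Massey product of $1$--classes vanished, the stage--$N$ piece of the $1$--minimal model would be forced to coincide with the formal one. As you yourself concede, this converse is false for ordinary Massey products, and your repair --- replacing them by matric products --- proves a statement different from the corollary, which (with Massey products defined as in Dwyer, the convention the paper adopts) asserts a nonvanishing \emph{ordinary} product of some length $N>n$. Remarking that in explicit cases, such as the separating Dehn twist, an ordinary product ``already witnesses the non--formality'' is a claim, not an argument, so your proposal does not prove the corollary as stated.

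The paper never needs any completeness of Massey products as formality obstructions; it only uses the easy direction of Dwyer's lifting dictionary (a defining system that fails to lift has nonvanishing product), applied to a map built concretely from the mapping--torus presentation. With $N$ minimal as in your setup, let $P : \pi_1(X_\phi,*)/\pi_1(X_\phi,*)_{N+1} \to \pi_1(S_g,*)/\pi_1(S_g,*)_N \otimes k$ send each generator $a_i,b_j$ to its namesake and $\sigma$ (and the loop of the extra circle factor) to $1$. Since $\phi_* \neq \text{Id}$ modulo $\pi_1(S_g,*)_{N+1}$, some generator $c$ has $w_c=\phi_*(c)c^{-1}$ a bracket of order $N$, nontrivial modulo brackets of order $N+1$; and in $\pi_1(X_\phi,*)$ the mapping--torus relation reads $w_c=(\sigma,c)$. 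Any lift $\tilde P$ to $\pi_1(S_g,*)/\pi_1(S_g,*)_{N+1} \otimes k$ must send $\sigma$ into $\pi_1(S_g,*)_N/\pi_1(S_g,*)_{N+1} \otimes k$, which is the center, forcing $\tilde P(w_c)=\tilde P((\sigma,c))=1$; but it must also send each $a_i,b_j$ to its namesake times a central element, forcing $\tilde P(w_c)=w_c \neq 1$. Hence $P$ does not lift through this central extension, and Dwyer's characterization converts this specific obstruction into a nonvanishing Massey product of $1$--cohomology classes. This explicit non--liftable homomorphism is the idea your outline is missing; once it is in place, the problematic converse (completeness of ordinary, or even matric, Massey products as obstructions to stagewise formality) is never needed.
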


\begin{proof}
Massey products of 1--cohomology classes up to length $n$ can be computed in Sullivan's $(1,n)$--minimal model of $X_\phi$, hence in the cohomology ring $H^*(X_\phi;k)$ as $X_\phi$ is $(1,n)$--formal, so they vanish.

Let now $N$ be the first nilpotency class for which $\cL_N(\pi_1(X_\phi,*),k) \not\cong \cL_N(\pi_1(X_{\text{Id}},*),k) \cong 
\cL_N(\pi_1(S_g,*),k) \oplus k^2$. Projection modulo brackets of order $N$, the isomorphism of unipotent completions of class $N-1$ between $X_\phi,X_{\text{Id}}$, and taking quotient by the abelian factor $k^2$ define a surjective group morphism $P$ indicated in the diagram
\begin{equation} \label{eq:massey}
\begin{array}{ccc}
 & & \pi_1(S_g,*)/ \pi_1(S_g,*)_{N+1} \otimes k \\
 & & \downarrow \\
\pi_1(X_\phi,*)/ \pi_1(X_\phi,*)_{N+1} & \stackrel{P}{\longrightarrow} & \pi_1(S_g,*)/ \pi_1(S_g,*)_N \otimes k
\end{array}
\end{equation}
sending every generator $a_i,b_j$ in the presentation of $\pi_1(M_\phi,*)$ of Prop. \ref{p:mt} to its identically named generator in the standard presentation of $\pi_1(S_g,*)$, and sending $\sigma$ and the loop from the additional factor $S^1$ to 1. We will check that it is impossible to lift the morphism $P$ in \eqref{eq:massey} to a group morphism $\tilde{P} : \pi_1(X_\phi,*)/ \pi_1(X_\phi,*)_{N+1} \rightarrow \pi_1(S_g,*)/ \pi_1(S_g,*)_{N+1} \otimes k$.

The fact that $\phi_* \neq \text{Id}$ on $\pi_1(S_g,*)/ \pi_1(S_g,*)_{N+1}$ implies that for some generator $c =a_i$, or $c=b_j$ in the standard presentation of $\pi_1(S_g,*)$ the element $w_c=\phi_*(c) c^{-1}$ is a bracket of order $N$, nontrivial modulo brackets of order $N+1$ in $\pi_1(S_g,*)$. 

In $\pi_1(X_\phi,*)$ we have that $w_c= (\sigma,c)$. As $P(\sigma)=1$, any lift $\tilde P(\sigma)$ would have to lie in $\pi_1(S_g,*)_N \otimes k$, the center of $\pi_1(S_g,*)/ \pi_1(S_g,*)_{N+1} \otimes k$, thus $\tilde{P}(\sigma,c)=1$.

Regard $w_c$ as a word in the generators of $\pi_1(X_\phi,*)$ given by presentation of Prop. \ref{p:mt}. These generators are sent by $P$ to their namesakes in the standard presentation of $\pi_1(S_g,*)$. Thus any lift $\tilde{P}$ would have to send the generators $a_1,b_1, \dots, a_g,b_g$ to images $a_1 w_{a_1}, \dots b_g w_{b_g}$ respectively, where $w_{a_1}, \dots, w_{b_g}$ are elements in $\pi_1(S_g,*)_N/ \pi_1(S_g,*)_{N+1} \otimes k$. The latter subgroup is the center of $\pi_1(S_g,*)/ \pi_1(S_g,*)_{N+1} \otimes k$, so $\tilde{P}(w_c)$ has to be exactly the word $w_c$, which is not trivial modulo $\pi_1(S_g,*)_{N+1}$, hence not trivial in $\pi_1(S_g,*)/ \pi_1(S_g,*)_{N+1} \otimes k$.
 
The conclusion of the above discussion is that the morphism to a unipotent group $\pi_1(S_g,*)/ \pi_1(S_g,*)_{N+1} \otimes k$ modulo its center, $P : \pi_1(X_\phi,*)/ \pi_1(X_\phi,*)_N \rightarrow \pi_1(S_g,*)/ \pi_1(S_g,*)_{N+1} \otimes k$ does not lift to the unipotent group itself. By the characterization of Dwyer (\cite{Dwy}), a Massey product of 1--cohomology classes in $\pi_1(X_\phi,*)$ must exist and not vanish.
\end{proof} 

\bigskip

We will conclude with another consequence of Thm. \ref{t:main}:  

\begin{cor}
If a smooth map $\phi : S_g \to S_g$ is such that $\phi_*=\text{Id}$ in $\pi_1(S_g,*)/ \pi_1(S_g,*)_n$ for $n \ge 3$, but $\phi_* \neq \text{Id}$ in the mapping class group $M(g,0)$, then its symplectized mapping torus $X_\phi$ does not admit a holomorphic structure.
\end{cor}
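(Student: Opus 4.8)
The plan is to argue by contradiction, converting the failure of $1$--formality supplied by Theorem \ref{t:main} into an obstruction to the existence of a complex structure. First I would observe that the hypotheses here are exactly those of Theorem \ref{t:main}: since $\phi_*=\text{Id}$ on $\pi_1(S_g,*)/\pi_1(S_g,*)_n$ with $n\ge 3$, in particular $\phi_*$ is the identity on $H_1(S_g;\bZ)\cong \pi_1(S_g,*)/\pi_1(S_g,*)_2$, so $\phi\in\cT_g$, while $\phi_*\ne\text{Id}$ in $M(g,0)$. Theorem \ref{t:main} then applies and tells us that $X_\phi$ is \emph{not} $1$--formal.

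Next I would suppose, for contradiction, that $X_\phi$ admits a holomorphic structure. As $X_\phi=M_\phi\times S^1$ is a closed, connected, oriented real $4$--manifold, such a structure makes it a compact complex surface. I would then read off its first Betti number from Proposition \ref{p:unip} (i): since $\phi\in\cT_g$ and $n\ge 3$, there is an isomorphism of cohomology algebras $H^*(X_\phi;\bZ)\cong H^*(S_g\times\bT^2;\bZ)$, so that $b_1(X_\phi)=b_1(S_g\times\bT^2)=2g+2$, which is even.

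The decisive input is then the classification--theoretic criterion that a compact complex surface is K\"ahler if and only if its first Betti number is even (Kodaira, with the direct analytic arguments of Buchdahl and Lamari covering the remaining cases). Since $b_1(X_\phi)$ is even, this forces $X_\phi$ to carry a K\"ahler metric. But by the theorem of Deligne--Griffiths--Morgan--Sullivan (\cite{DGMS}), compact K\"ahler manifolds are formal, hence in particular $1$--formal. This contradicts the conclusion of the first paragraph, and therefore $X_\phi$ admits no holomorphic structure.

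The hard part --- indeed the only nontrivial external ingredient --- is the even--$b_1$ K\"ahler criterion, which is special to complex dimension $2$ and is precisely where the $4$--dimensionality of $X_\phi$ enters in an essential way; everything else is a direct assembly of Theorem \ref{t:main}, Proposition \ref{p:unip} (i), and \cite{DGMS}. The one bookkeeping point I would verify is that $1$--formality, rather than full formality, is the correct invariant to obstruct: \cite{DGMS} yields formality of compact K\"ahler manifolds, and formality implies $1$--formality, so the implication runs in the direction needed to clash with the non--$1$--formality of $X_\phi$.
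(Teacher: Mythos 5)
Your proposal is correct and follows essentially the same route as the paper: both deduce non--$1$--formality from Theorem \ref{t:main}, observe that $b_1(X_\phi)$ is even, invoke the Kodaira-type classification of compact complex surfaces with even $b_1$ to conclude $X_\phi$ would be K\"ahler (the paper phrases this as deformation equivalence, hence diffeomorphism, to a projective surface, while you cite the K\"ahler criterion directly via Buchdahl--Lamari), and then contradict this with the formality of compact K\"ahler manifolds from \cite{DGMS}. The two phrasings of the classification input are interchangeable here, since formality is a diffeomorphism invariant, so this is the same proof.
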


\begin{proof}
The first Betti number of the 4--manifold $X_\phi$ is even, and all compact holomorphic manifolds of complex dimension 2 with even $b_1$ were shown by Kodaira to be deformation equivalent, in particular diffeomorphic, to smooth complex projective surfaces (see Ch. IV.3 of \cite{BHPV}). The latter are compact K\"ahler manifolds, which are formal (\cite{DGMS}), in particular 1--formal. So by Thm. \ref{t:main} $X_\phi$ cannot support a holomorphic structure.   
\end{proof}

\end{document}